\newcommand{\gam}{\Gamma}
\newcommand{\Q}{\mathbb{Q}}
\newcommand{\R}{\mathbb{R}}
\newcommand{\C}{\mathbb{C}}
\newcommand{\G}{\mathcal{G}}
\newcommand{\Go}{\mathcal{G}^{\mathrm{opp}}}
\newcommand{\bp}{\begin{problem}}
\newcommand{\ep}{\end{problem}}
\newcommand{\ba}{\begin{answer}}
\newcommand{\ea}{\end{answer}}
\newcommand{\ben}{\renewcommand{\theenumi}{\alph{enumi}}

\renewcommand{\labelenumi}{(\theenumi)}\begin{enumerate}}
\newcommand{\een}{\end{enumerate}}
\newcommand{\GL}{\mathrm{GL}}
\newcommand{\sgn}{\mathrm{sgn}}
\newcommand{\Id}{\mathrm{Id}}
\newtheorem{defin}{Definition}[section]
\newtheorem{thm}[defin]{Theorem}
\newtheorem{cor}[defin]{Corollary}
\newtheorem{rem}[defin]{Remark}
\newtheorem{lem}[defin]{Lemma}
\newtheorem{prop}[defin]{Proposition}
\newtheorem{quest}[defin]{Question}
\newtheorem{prob}[defin]{Problem}
\newtheorem{ex}[defin]{Example}
\title[Graph Hamiltonicity and RAAGs]{Hamiltonicity via cohomology of right-angled Artin groups}
\begin{document}
\date{\today}
\keywords{determinant; hamiltonian graph; minor; right-angled Artin group}
\subjclass[2010]{15A15, 20F36, 05C45}

\author[R. Flores]{Ram\'{o}n Flores}
\address{Ram\'{o}n Flores, Department of Geometry and Topology, University of Seville, Spain}
\email{ramonjflores@us.es}

\author[D. Kahrobaei]{Delaram Kahrobaei}
\address{Delaram Kahrobaei, Department of Computer Science, University of York, UK,
New York University, Tandon School of Engineering, PhD Program in Computer Science, CUNY Graduate Center}
\email{dk2572@nyu.edu, delaram.kahrobaei@york.ac.uk}

\author[T. Koberda]{Thomas Koberda}
\address{Thomas Koberda, Department of Mathematics, University of Virginia, Charlottesville, VA 22904}
\email{thomas.koberda@gmail.com}

\begin{abstract}
Let $\Gamma$ be a finite graph and let $A(\Gamma)$ be the corresponding right-angled Artin group. We characterize
the Hamiltonicity of $\Gamma$ via the structure of the cohomology algebra of $A(\Gamma)$. In doing so, we define and develop
a new canonical graph associated to a matrix, which as a consequence provides a novel perspective on the matrix determinant.

\end{abstract}

\maketitle

\section{Introduction}
\label{Intro}
Let $\gam$ be a finite, undirected graph. A basic problem in graph theory that has been studied in a dizzyingly vast body of literature
(see~\cite{diestel-book} for instance)
 is determining when $\gam$ admits a ~\emph{Hamiltonian path} or a ~\emph{Hamiltonian cycle}, which is to say either a path
 or a cycle that visits every vertex of $\gam$ exactly once.
 One of the many reasons that the problem of determining whether $\gam$ is Hamiltonian (i.e.~admits such
 a path or cycle) is interesting is because it defies a straightforward characterization, and
 because it gives rise to the prototypical NP--complete
 decision problem~\cite{AB09,Minsky67,GJ1979}. In this paper, we develop an algebraic framework for characterizing Hamiltonicity of
 simplicial graphs via right-angled groups, and in particular the cohomology of these groups.

To set up the discussion, let $\gam$ be a finite simplicial graph (i.e.~a graph that is also a simplicial complex)
with vertex set $V(\gam)$ and edge set $E(\gam)$. We retain the terminology \emph{simplicial} because it is standard in the theory
of right-angled Artin groups. Sometimes such graphs are called \emph{simple} graphs. We write
\[A(\gam)=\langle V(\gam)\mid [v,w]=1,\, \{v,w\}\in E(\gam)\rangle\] for the right-angled Artin group on $\gam$.

The isomorphism type of $A(\gam)$ determines the isomorphism type of $\gam$,
as is well-known from the work of many authors~\cite{Droms87,Sabalka09,KoberdaGAFA,koberda21survey}.
Thus, the combinatorics of $\gam$ should be reflected in the algebra of $A(\gam)$.

\subsection{Combinatorics versus group theory}

It is frequently useful to consider an object or a property in a certain context, and to identify the correct analogue in a different category.
The advantage of this approach is that there can be tools that may be available in the latter category but
not in the former, and these can be used to attack problems that appear unsolvable from the original point of view.
There are many examples where this strategy has been crucial in obtaining important results.

The present work fits in a series of papers (see references after Problem \ref{prob:property} below) where concepts and properties from
graph theory are translated into group theory and homotopy theory. On the one hand, we have the graph $\gam$, which is a discrete,
finitary object, amenable to investigation through combinatorics. On the other hand, we have the group $A(\gam)$,
which can be investigated not only through group theory, but also through the machinery of algebraic topology.
More precisely, graph-theoretic properties are reflected in the cohomology of the group with coefficients in a field, a
homotopical object which possesses rich structure (e.g.~the structure of an algebra, an action of the Steenrod algebra
when the field is finite, differential models if the field is $\Q$, etc.)~which in this way becomes suitable for applications to
 graph theory.
In particular, translation between combinatorics and group theory/topology provides a strategy for attacking the following problem,
whose resolution occupies the bulk of this note:

\begin{prob}\label{prob:hamilton}
Translate the property of Hamiltonicity from the category of graphs to the category of right-angled Artin groups.
\end{prob}

To proceed with resolving problem~\ref{prob:hamilton}, let $V$ and $W$ be vector spaces over a field $F$ with $V$ finite dimensional,
and let $q\colon V\times V\to W$ be a (anti-)symmetric
bilinear pairing on $V$.
We say that $(V,W,q)$ is~\emph{Hamiltonian} if whenever $\{w_1,\ldots,w_n\}$ is a basis for $V$ then there is a permutation
$\sigma\in S_n$ such that for all $1\leq i<n$, we have $q(w_{\sigma(i)},w_{\sigma(i+1)})\neq 0$.

The following two results are complete answers to Problem~\ref{prob:hamilton}.
The first intrinsically characterizes graphs admitting Hamiltonian paths
 via the cohomology ring of the corresponding right-angled Artin groups.

\begin{thm}\label{thm:main}
Let $\gam$ be a finite simplicial graph and let $F$ be a field. We set
\[V=H^1(A(\gam),F),\quad W=H^2(A(\gam),F),\] and let $q$ be the
cup product pairing $V\times V\to W$. Then $\gam$ admits a Hamiltonian path if and only if $(V,W,q)$ is Hamiltonian.
\end{thm}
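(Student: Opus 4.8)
The plan is to translate the cup-product pairing into explicit linear algebra and then isolate a single statement about invertible matrices. First I would record the standard computation of the cohomology ring of $A(\gam)$: the algebra $H^*(A(\gam),F)$ is the exterior face ring of $\gam$, so $V=H^1$ has a basis $\{v^* : v\in V(\gam)\}$ dual to the vertices, $W=H^2$ has a basis $\{e_{vw} : \{v,w\}\in E(\gam)\}$ dual to the edges, and on these generators the cup product satisfies $v^*\cup w^* = \pm e_{vw}$ when $\{v,w\}\in E(\gam)$ and $v^*\cup w^*=0$ otherwise. In particular $q$ is anti-symmetric and $q(x,x)=0$. For a basis $\{w_1,\dots,w_n\}$ encoded as the rows $r_1,\dots,r_n$ of an invertible matrix $A=(a_{sk})\in\GL_n(F)$ (so $w_s=\sum_k a_{sk}v_k^*$), bilinearity gives
\[ q(w_s,w_t)=\sum_{\{k,l\}\in E(\gam)}\bigl(a_{sk}a_{tl}-a_{sl}a_{tk}\bigr)\,e_{kl}, \]
so $q(w_s,w_t)\neq 0$ if and only if some $2\times 2$ minor of $A$ on an \emph{edge} pair of columns $\{k,l\}\in E(\gam)$ is nonzero. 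Define the support graph $H_A$ on vertex set $[n]$ by $s\sim t$ exactly when $q(w_s,w_t)\neq 0$; then $(V,W,q)$ being Hamiltonian is precisely the assertion that $H_A$ admits a Hamiltonian path for \emph{every} $A\in\GL_n(F)$, while $\gam$ being Hamiltonian is this assertion for $A=\Id$, since $H_{\Id}=\gam$.

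This makes the easy direction immediate: if $(V,W,q)$ is Hamiltonian, then specializing to the standard basis $A=\Id$ produces a Hamiltonian path in $H_{\Id}=\gam$. The content is the converse. For this I would first reduce to the path graph. Assuming $\gam$ has a Hamiltonian path, relabel the vertices so that $1,2,\dots,n$ is such a path; then $E(\gam)\supseteq\{\{i,i+1\} : 1\le i<n\}$, and for every $A$ the support graph $H_A$ contains the support graph $H_A^{P}$ built from only the path edges. Since a Hamiltonian path in $H_A^{P}$ is a fortiori one in $H_A$, it suffices to prove the following statement about a single matrix: for every $A\in\GL_n(F)$, the graph on $[n]$ with $s\sim t$ iff some \emph{consecutive} minor $a_{si}a_{t,i+1}-a_{s,i+1}a_{ti}$ ($1\le i<n$) is nonzero admits a Hamiltonian path. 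This is the promised ``canonical graph of a matrix'', and the point is that its Hamiltonicity is forced by invertibility alone.

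To prove the path case I would argue by induction on $n$, using Laplace expansion to convert the hypothesis $\det A\neq 0$ into combinatorial data. Expanding along the last column, there is a row index $k$ with $a_{kn}\neq 0$ whose complementary minor (delete row $k$ and column $n$) is nonzero; deleting column $n$ leaves the remaining $n-1$ rows as an invertible $(n-1)\times(n-1)$ matrix $A'$ whose consecutive minors on columns $\{1,\dots,n-1\}$ agree with those of $A$. The inductive hypothesis yields a Hamiltonian path through the $n-1$ rows $\{r_j : j\neq k\}$, and it remains to splice the pivot row $r_k$ onto this path. Here the key local computation is that $r_k\sim r_j$ via the last pair of columns fails only when the $(n-1,n)$-projection of $r_j$ is parallel to the fixed nonzero vector $(a_{k,n-1},a_{kn})$; such ``bad'' rows have pairwise-parallel final projections, which constrains them severely.

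The hard part will be exactly this splicing step: the inductive path is handed to me with uncontrolled endpoints, and $r_k$ need not be adjacent to either of them through the columns $(n-1,n)$. I expect to resolve this by strengthening the inductive statement so that one endpoint can be prescribed, or by exploiting the freedom to choose the pivot row $k$ among all valid Laplace pivots, together with the rigidity that forces a near-complete support graph. The underlying phenomenon, which is what makes the whole scheme work and is the novel input of the paper, is that a $2\times m$ matrix all of whose consecutive $2\times 2$ minors vanish must have its columns partitioned by its zero columns into blocks of rank $\le 1$; for two rows of an invertible $A$ this is so restrictive that non-adjacencies in the support graph cannot accumulate enough to destroy every Hamiltonian path. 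Converting this rigidity into a clean adjacency count for the splicing step is the crux, and is where I would expect the canonical-graph/determinant lemma developed earlier in the paper to do the decisive work.
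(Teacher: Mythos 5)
Your setup, your proof of the easy direction, and your reduction of the hard direction are all correct and coincide with the paper's own route: the paper likewise passes to the dual bases of Lemma~\ref{lem:coho}, disposes of one implication by specializing to the standard basis (Lemma~\ref{lem:onlyif}), and reduces the other implication to the statement that for every $A\in\GL_n(F)$ the graph on the rows of $A$, with adjacency given by non-vanishing of some consecutive-column $2\times 2$ minor, has a Hamiltonian path (this is exactly the two--row graph $\G(A)$ and Lemma~\ref{lem:two-row}, equivalently Proposition~\ref{prop:alternative}). However, that statement is the entire content of the theorem, and your proposal does not prove it. Your Laplace-expansion induction stalls at precisely the step you flag: the inductive path on the rows of $A'$ comes with uncontrolled endpoints, and there is no way to prescribe them, nor to choose among pivots. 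For $A=\Id$ the two--row graph is a path, so the endpoints of any Hamiltonian path are forced and the Laplace pivot is unique; this same example refutes the ``rigidity that forces a near-complete support graph'' you hope to invoke --- invertibility does not force $\G(A)$ to have many edges, so no adjacency count can rescue the splicing. Strengthening the induction to prescribe an endpoint is likewise hopeless as stated, since the prescribed endpoint may simply not be an endpoint of any Hamiltonian path.

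The paper's proof of Lemma~\ref{lem:two-row} is not inductive and does not expand the determinant along a column; it is a global cancellation argument. Suppose no reordering of the rows avoids a consecutive null--connected pair. One introduces \emph{$1$--blocks} (Definition~\ref{def:1-block}): maximal all-nonzero submatrices on consecutive columns whose rows span a connected subgraph of the null--connection graph $\Go(A)$. Their rows are pairwise proportional (Lemma~\ref{lem:row-space}) and distinct $1$--blocks are disjoint (Lemma~\ref{lem:1-block}); this is the rank-one rigidity you correctly identified, but it is exploited multiplicatively rather than combinatorially. Each nonzero generalized diagonal $(a_{\sigma(1)}^1,\ldots,a_{\sigma(n)}^n)$ of $A$ then belongs to a unique \emph{complete $1$--track} (Lemma~\ref{lem:unique}), and the standing hypothesis forces every such track to contain a $1$--minor. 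Within a track containing a $1$--minor with $k>1$ rows, the product of entries over the minor is independent of the permutation (proportional rows), so summing the signed contributions over the $S_k$-orbit gives zero, and hence the whole track contributes zero to $\det(A)$ (Lemma~\ref{lem:perm-cancel}). Partitioning all nonzero diagonals into complete $1$--tracks yields $\det(A)=0$, a contradiction. So the gap in your proposal is exactly the theorem's main innovation: you would need either to complete the splicing step (for which no mechanism is in sight) or to replace it with a determinant-cancellation argument of this kind.
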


We are able to prove an analogue of Theorem~\ref{thm:main} for graphs admitting Hamiltonian cycles as well. We will say
that $(V,W,q)$ above is~\emph{cyclic Hamiltonian} if for every basis $\{w_1,\ldots,w_n\}$ of $V$,
there is a permutation
$\sigma\in S_n$ such that for all $1\leq i<n$, we have $q(w_{\sigma(i)},w_{\sigma(i+1)})\neq 0$, and also
$q(w_{\sigma(n)},w_{\sigma(1)})\neq 0$.

\begin{thm}\label{thm:main-cycle-intro}
Let $\{\gam,F,V,W,q\}$ be as in Theorem~\ref{thm:main}.
Then $\gam$ admits a Hamiltonian cycle if and only if $(V,W,q)$ is cyclic Hamiltonian.
\end{thm}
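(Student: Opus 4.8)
The plan is to prove Theorem~\ref{thm:main-cycle-intro} as the exact cyclic analogue of Theorem~\ref{thm:main}, using the same dictionary between bases of $V$ and graphs. First I would record the translation furnished by the cohomology of $A(\gam)$: with respect to the basis $B_0$ of $V=H^1(A(\gam),F)$ dual to the vertices, the cup product satisfies $q(v^*,w^*)\neq 0$ exactly when $\{v,w\}\in E(\gam)$, so the graph on $B_0$ whose edges record the nonzero pairings is literally $\gam$. For an arbitrary basis $B=\{w_1,\dots,w_n\}$ let $\gam_B$ be the graph on $\{1,\dots,n\}$ with an edge $\{i,j\}$ precisely when $q(w_i,w_j)\neq 0$. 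A single permutation $\sigma$ realizing all the nonvanishing demanded by the cyclic Hamiltonian condition is exactly a Hamiltonian cycle of $\gam_B$, so $(V,W,q)$ is cyclic Hamiltonian if and only if $\gam_B$ has a Hamiltonian cycle for every basis $B$. One implication is then immediate: if $(V,W,q)$ is cyclic Hamiltonian, applying the condition to $B_0$ returns a cyclic ordering of the vertices with consecutive vertices adjacent, i.e.\ a Hamiltonian cycle of $\gam$.

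For the converse I would run the cycle-analogue of the nontrivial implication of Theorem~\ref{thm:main}. Since a Hamiltonian cycle of $\gam$ is in particular a Hamiltonian path, Theorem~\ref{thm:main} already guarantees that $\gam_B$ contains a Hamiltonian path for every $B$; the remaining task is to upgrade such a path to a cycle, that is, to secure the closing pairing $q(w_{\sigma(n)},w_{\sigma(1)})\neq 0$ together with the $n-1$ interior ones under a single $\sigma$. I would therefore revisit the constructive core of Theorem~\ref{thm:main} with the cyclic boundary condition imposed throughout. The basic computation is that, writing $M$ for the change-of-basis matrix and using the edge basis of $W=H^2(A(\gam),F)$, the $\{i,j\}$-coordinate of $q(w_a,w_b)$ is the $2\times 2$ minor of $M$ on rows $\{i,j\}$ and columns $\{a,b\}$, for each $\{i,j\}\in E(\gam)$; hence $q(w_a,w_b)$ vanishes exactly when all of these edge-indexed minors vanish. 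The aim is to show that the edges of a fixed Hamiltonian cycle of $\gam$ supply, for every invertible $M$, enough surviving minors arranged around a loop to force a Hamiltonian cycle in $\gam_B$.

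The main obstacle is exactly this closing edge. In the path setting the two endpoints are free, so there is room to repair a single vanishing pairing by a local reordering without creating a new one; in the cyclic setting there are no free endpoints and the $n$ nonvanishing constraints are coupled around a loop, so repairing the wrap-around pairing can break an interior one. Establishing the required global consistency---that the minors coming from a Hamiltonian cycle of $\gam$ cannot all be simultaneously forced to vanish by any choice of basis---is the crux. I expect this to demand the full determinantal (canonical-graph) machinery rather than a soft monotonicity argument, since $\gam_B$ depends genuinely on $B$ and is not monotone in any naive sense.
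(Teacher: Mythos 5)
Your first paragraph is fine and matches the paper's easy direction: applying the cyclic Hamiltonian condition to the vertex-dual basis, via Lemma~\ref{lem:coho}, immediately yields a Hamiltonian cycle in $\gam$, just as in Lemma~\ref{lem:onlyif}. The problem is the converse: you set it up correctly, identify the correct obstacle (the closing pairing $q(w_{\sigma(n)},w_{\sigma(1)})\neq 0$ coupled to the $n-1$ interior ones), and then stop, writing that you ``expect this to demand the full determinantal (canonical-graph) machinery.'' That expectation is not a proof; the entire content of the theorem lives in exactly the step you defer. What is missing is the cyclic analogue of Lemma~\ref{lem:two-row}, namely the paper's Proposition~\ref{prop:two-row}: for $A\in\GL_n(F)$, the \emph{cyclic} two--row graph $\G^c(A)$ --- in which rows $r_i,r_j$ are additionally declared adjacent when the wraparound minor on columns $\{n,1\}$ is invertible --- contains a Hamiltonian cycle. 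The wraparound adjacency is not optional bookkeeping: already for $A=\Id$ the graph $\G(\Id)$ is a path and only $\G^c(\Id)$ is a cycle, so the statement you would need (``$\G(A)$ itself is cycle-Hamiltonian'') is false, and the cyclic enlargement is what matches the closing edge $q(v_n^*,v_1^*)\neq 0$ supplied by the Hamiltonian cycle of $\gam$.

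Moreover, the extension of the machinery is genuinely not verbatim, which is why asserting it ``should work'' leaves a gap. In the cyclic setting null--connectedness becomes a \emph{stronger} relation (one more minor must vanish), so $1$--blocks can wrap around the last and first columns (the paper's Example~\ref{ex:7x7} acquires a second $1$--block $M_{\{6,7\}}^{7,1}$ exactly this way), and the definitions of $1$--block, $1$--minor, and $1$--track from Definition~\ref{def:1-block} onward must all be redone with columns ordered cyclically (the paper's picture: the matrix written on a torus), including a compatibility condition $t_r\leq s_1$ tying the last minor of a track back to the first. One must then recheck that Lemmas~\ref{lem:row-space}, \ref{lem:1-block}, \ref{lem:unique}, and~\ref{lem:perm-cancel} survive these changes --- the cancellation argument of Lemma~\ref{lem:perm-cancel}, for instance, is rescued by cyclically permuting columns so that a cyclic $1$--minor appears first in the track --- and finally run the contradiction: if every cyclic ordering of the rows has some null--connected consecutive pair, then every nonzero permutation string belongs to a cyclic $1$--track containing a $1$--minor, whence $\det(A)=0$. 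Your proposal contains none of these steps, so as written it proves only the easy direction.
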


The reader will note that
the property of the cohomology algebra of $A(\gam)$ being Hamiltonian is truly intrinsic to $A(\gam)$. The cohomology algebra of
$A(\gam)$ is canonically associated to the group. Moreover, since Hamiltonian vector spaces are defined without reference to a particular
basis, Hamiltonicity of a vector space is truly intrinsic to $(V,W,q)$.

Throughout the paper, we will concentrate on the proof of Theorem~\ref{thm:main}. Relatively mild generalizations are needed
to obtain Theorem~\ref{thm:main-cycle-intro}, and these are spelled out in Section~\ref{sec:cycle}.

Theorems~\ref{thm:main} and~\ref{thm:main-cycle-intro} fit into a broader circle of ideas.
We start with the following general guiding problem, which generalizes
Problem~\ref{prob:hamilton}:
\begin{prob}\label{prob:property}
Let $P$ be a property of finite simplicial graphs. Find a property $Q$ of groups such that $\gam$ has $P$ if and only if $A(\gam)$ has
$Q$.
\end{prob}
We insist that $Q$ be a property of the isomorphism type of the group, so that
in particular, $Q$ should be independent of any generating set.
Some satisfactory answers to Problem~\ref{prob:property} are known, for instance
when $P$ is the property of being a nontrivial join~\cite{Servatius1989},
being disconnected~\cite{BradyMeier01}, containing a square~\cite{Kambites09,KK2013gt}, being a co-graph~\cite{KK2013gt,KK2018jt},
being a finite tree or complete bipartite graph~\cite{HS2007}, admitting a nontrivial automorphism~\cite{FKK2019}, being $k$--colorable~
\cite{FKK2020a},
and fitting in a sequence of expanders~\cite{FKK2020exp}. Thus, Theorems~\ref{thm:main} and~\ref{thm:main-cycle-intro} resolve
Problem~\ref{prob:property} when $P$ is graph Hamiltonicity.

\subsection{Linear algebra}

The main innovation needed to prove
Theorems~\ref{thm:main} and~\ref{thm:main-cycle-intro} is a certain combinatorial object called the \emph{two--row graph} associated
to a matrix (see Subsection~\ref{ss:reduction} below). The combinatorics of this graph appear to be difficult to understand in general. The
crucial observation for us is that if a matrix is invertible, then its two--row graph contains a Hamiltonian path (Lemma \ref{lem:two-row}).
This result is in fact equivalent to the following purely linear algebraic fact.
Here, we say that two rows of a square matrix $A$ are \emph{null-connected}
if all the consecutive $2\times 2$ minors that they determine
(i.e.~minors in which the columns are consecutive in $A$) are singular; cf.~Definition \ref{def:null-conn} and Lemma~\ref{lem:two-row}.

\begin{prop}
\label{prop:alternative}

Let $A\in\GL_n(F)$. Then there exists a reordering of the rows of the matrix $A$ such that no consecutive rows are null-connected.
In particular, the two--row graph of $A$ always contains a Hamiltonian path.

\end{prop}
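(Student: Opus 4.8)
\emph{The plan.} The starting point is to linearize the notion of null-connectedness. For two rows $u,v\in F^n$, the consecutive $2\times 2$ minor in columns $k,k+1$ is exactly $u_kv_{k+1}-u_{k+1}v_k$, which is the coefficient of $e_k\wedge e_{k+1}$ in the bivector $u\wedge v\in\bigwedge^2 F^n$. Thus $u$ and $v$ are null-connected precisely when $u\wedge v$ has vanishing coordinate along every \emph{consecutive} basis bivector $e_k\wedge e_{k+1}$, $1\le k\le n-1$. For a fixed nonzero $u$ this is a linear condition on $v$: the assignment $v\mapsto (u_kv_{k+1}-u_{k+1}v_k)_{k=1}^{n-1}$ is the linear map $F^n\to F^{n-1}$ given by the bidiagonal matrix $L_u$ whose $k$-th row has $-u_{k+1}$ in column $k$ and $u_k$ in column $k+1$. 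Hence the set of rows null-connected to $u$ is the subspace $\ker L_u$, and since $u\neq 0$ this subspace is proper. This observation---that ``being null-connected to $u$'' is a linear, and in particular a \emph{subspace}, condition---is the engine that lets invertibility interact with the combinatorics.

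\emph{Reduction by induction.} I would prove the statement by induction on $n$, the base case $n\le 2$ being immediate (for $n=2$, null-connectedness is just linear dependence, which invertibility forbids). For the inductive step, expand $\det A$ along the last column to select a row $i^*$ with $a_{i^*,n}\neq 0$ and with nonzero complementary minor, so that the $(n-1)\times(n-1)$ matrix $A'$ obtained by deleting row $i^*$ and column $n$ is invertible. By the inductive hypothesis the two-row graph of $A'$ contains a Hamiltonian path; since a nonzero consecutive minor of $A'$ (which uses only columns $1,\dots,n-1$) is also a nonzero consecutive minor of $A$, this path lifts verbatim to a path $P=v_1-\cdots-v_{n-1}$ on the rows $\{r_j:j\neq i^*\}$ in the two-row graph of $A$. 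The entire problem is thereby reduced to inserting the remaining row $u:=r_{i^*}$ into $P$.

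\emph{Insertion, and the main obstacle.} Because $u_n=a_{i^*,n}\neq 0$, the single minor in columns $(n-1,n)$ already witnesses compatibility: $u$ is non-null-connected to every row lying outside the hyperplane $H_u=\{v: u_{n-1}v_n=u_n v_{n-1}\}$, which is genuinely a hyperplane as $(u_{n-1},u_n)\neq 0$. Since $u\in H_u$ and the rows of $A$ form a basis, the $v_t$ cannot all lie in $H_u$---otherwise all $n$ rows would lie in a hyperplane and $\det A$ would vanish. Consequently at least one $v_t$ is compatible with $u$. If some endpoint of $P$ lies outside $H_u$ we append $u$ there, and if two consecutive vertices $v_t,v_{t+1}$ lie outside $H_u$ we splice $u$ between them; either move completes the Hamiltonian path. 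The hard case, and the crux of the argument, is when the vertices outside $H_u$ reduce to a single interior vertex of $P$, so that neither appending nor splicing applies to this particular path. I expect to resolve this by strengthening the inductive statement to deliver a Hamiltonian path with a prescribed endpoint outside a given hyperplane---or, equivalently, by a P\'osa-type rotation that transports the unique ``good'' vertex to an end of $P$---and it is exactly here that invertibility (through the subspace description $\ker L_u$ and the impossibility of trapping a basis inside a hyperplane) must be used in an essential way. Controlling this re-routing uniformly across the induction is the step I expect to be the most delicate.
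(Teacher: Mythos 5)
Your setup is sound as far as it goes: the linearization of null-connectedness (the kernel $\ker L_u$ observation), the choice of the row $i^*$ via Laplace expansion along the last column, and the fact that a Hamiltonian path in $\G(A')$ lifts to a path on the rows $\{r_j : j\neq i^*\}$ inside $\G(A)$ are all correct. But the ``hard case'' you flag at the end is a genuine gap, not a routine delicacy, and neither of the repairs you sketch can close it. The strengthened inductive statement --- a Hamiltonian path of $\G(A')$ with an endpoint outside a prescribed hyperplane that does not contain all the rows --- is simply false: for $A'=\Id_m$ the graph $\G(A')$ is exactly the path $e_1 - e_2 - \cdots - e_m$, so every Hamiltonian path has endpoints $e_1$ and $e_m$; taking $H=\{v : v_2=0\}$ with $m\geq 3$ gives a hyperplane containing both forced endpoints but not the row $e_2$. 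A P\'osa-type rotation fares no better, since rotations need extra edges between path vertices, and $\G(A')$ can again be a bare path (the identity), admitting no rotations at all. The deeper structural problem is that the inductive hypothesis hands you \emph{one particular} Hamiltonian path of $\G(A')$, whereas the bad configuration may require a genuinely different ordering of those same $n-1$ rows, possibly exploiting edges of $\G(A)$ arising from the column pair $(n-1,n)$, which are invisible to $\G(A')$. Nothing in your argument controls this, and this is exactly the well-known reason why naive induction on Hamiltonicity tends to collapse.

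For comparison, the paper does not induct and never attempts to build the path by surgery; it argues by contraposition through the determinant. Assuming that \emph{no} ordering of the rows avoids a consecutive null-connected pair, every nonzero string $(a_{\sigma(1)}^1,\ldots,a_{\sigma(n)}^n)$ in the Leibniz expansion of $\det A$ must contain two consecutive entries lying in a common ``$1$--minor'' (a block of pairwise proportional row segments; the paper shows such $1$--blocks have one-dimensional row space and partition the matrix uniquely). The terms of the expansion are then grouped into ``complete $1$--tracks,'' each nonzero string belonging to exactly one, and the signed sum within each group vanishes because permuting the proportional rows inside a $1$--minor changes the sign of a permutation without changing the product of entries. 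Hence $\det A=0$, a contradiction. Your observation that null-connectedness to a fixed row is a linear condition is consonant with the paper's one-dimensionality lemma for $1$--blocks, but the mechanism that makes invertibility bite is this global cancellation, not path insertion. As it stands, your proof is incomplete precisely at the step you acknowledge, and the tools you propose there do not suffice.
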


From a linear algebraic perspective, Proposition~\ref{prop:alternative} is the main result of this article, and
in addition to its major role in the proof of Theorem \ref{thm:main} and Theorem \ref{thm:main-cycle}, it is of independent interest. Establishing this fact is highly nontrivial, and requires the development of some linear algebraic tools which, to the knowledge of
the authors, are entirely novel. These ideas,
which can be found in Subsection~\ref{ss:square-trace}, give a perspective on the determinant of a matrix which appears not to have been
known before.

While we are partially inspired by
classical approaches of expanding the determinant which rely on the computation of $2\times 2$ minors such as Laplace
expansion, the Dodgson condensation formula,
and the Sylvester formula~\cite{MR3208413}, our point of view is radically different.

The two--row graph can be defined for an arbitrary matrix (which need not even be square).
Sometimes this graph is very sparse (and may in fact be disconnected), and
sometimes it may have many edges. Its general behavior appears to be quite complicated. The consequences of our methods
can be summarized as follows:
the two--row graph is a new invariant of matrices, and among square matrices, the invertible ones
have the property that the two--row graph is Hamiltonian.

\section{Hamiltonian vector spaces and the two--row graph}

In order to prove Theorem~\ref{thm:main}, we begin by gathering some preliminary facts.

\subsection{Cohomology of right-angled Artin groups}

In this subsection, we recall some basic facts about the structure of the cohomology algebra of a right-angled Artin group $A(\gam)$. The
result recorded here is easy and well--known, and follows from
standard methods in geometric topology together with the fact that the Salvetti complex associated to $\gam$ is a classifying space for
$A(\gam)$. Some details are spelled out in~\cite{FKK2020exp,koberda21survey}, for instance.

Let $V(\gam)=\{v_1,\ldots,v_n\}$ and $E(\gam)=\{e_1,\ldots,e_m\}$ be the vertices and edges of $\gam$,
and write $\smile$ for the cup product pairing
on $H^*(A(\gam),F)$, where here $F$ denotes an arbitrary field.

\begin{lem}\label{lem:coho}
Let $\gam$ be a finite simplicial graph. Then there are bases $\{v_1^*,\ldots,v_n^*\}$ for $H^1(A(\gam),F)$ and $\{e_1^*,\ldots,e_m^*\}$ for
$H^2(A(\gam),F)$ such that:
\begin{enumerate}
\item
We have $v_i^*\smile v_j^*= 0$ if and only if $\{v_i,v_j\}\notin E(\gam)$;
\item
We have $v_i^*\smile v_j^*=\pm e_k^*$ whenever $\{v_i,v_j\}=e_k\in E(\gam)$.
\end{enumerate}
\end{lem}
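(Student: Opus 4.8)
The plan is to realize $H^*(A(\gam),F)$ as the cellular cohomology of the Salvetti complex $S(\gam)$, which we are given is a $K(A(\gam),1)$. First I would recall the cube-complex structure of $S(\gam)$: it has a single $0$-cell, one $1$-cell for each vertex $v_i\in V(\gam)$ (a loop representing the generator $v_i$), one $2$-cell for each edge $e_k=\{v_i,v_j\}\in E(\gam)$ (glued along the commutator word $v_iv_jv_i^{-1}v_j^{-1}$ so as to form the $2$-torus $S^1_i\times S^1_j$), and more generally one $k$-cell for each $k$-clique of $\gam$. Since $S(\gam)$ is aspherical, we have $H^*(S(\gam),F)\cong H^*(A(\gam),F)$ as graded algebras.

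The key preliminary observation is that all cellular boundary maps vanish: the boundary of the $2$-cell attached along $e_k$ is the abelianized commutator word, which is zero, and the analogous cancellation occurs in higher degrees. Consequently the cellular cochain complex carries zero differentials, so $H^1(A(\gam),F)$ has basis $\{v_1^*,\ldots,v_n^*\}$ dual to the $1$-cells and $H^2(A(\gam),F)$ has basis $\{e_1^*,\ldots,e_m^*\}$ dual to the $2$-cells. This produces the two claimed bases.

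For the cup product I would exploit the inclusion $\iota\colon S(\gam)\hookrightarrow (S^1)^n$ into the full torus on the vertex set, which is an isomorphism on $1$-skeleta and hence on $H^1$; the classes $v_i^*$ are the restrictions of the standard degree-one generators of $H^*((S^1)^n,F)\cong\Lambda[x_1,\ldots,x_n]$. In the torus, $x_i\smile x_j$ is, up to sign, the fundamental cohomology class of the coordinate subtorus $S^1_i\times S^1_j$. Restricting along $\iota$ then settles both assertions at once. If $\{v_i,v_j\}=e_k\in E(\gam)$, this subtorus is exactly the $2$-cell of $e_k$, so $v_i^*\smile v_j^*=\pm e_k^*$, which is statement (2). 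If $\{v_i,v_j\}\notin E(\gam)$, the corresponding $2$-cell is absent from $S(\gam)$, so the product is supported on a cell not present in the complex and therefore vanishes, which is statement (1). The sign in (2) is forced by the graded-commutativity relation $v_i^*\smile v_j^*=-\,v_j^*\smile v_i^*$ together with a choice of orientation for each $2$-cell, and plays no role in the applications.

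The only real subtlety lies in the cup-product computation of the previous paragraph: one must check carefully that the product of two degree-one classes is detected precisely on the $2$-torus that they span, and that the absence of that torus forces the product to vanish. This is exactly where the description of $S(\gam)$ as a subcomplex of $(S^1)^n$ does the work, reducing the entire verification to the elementary cohomology of $(S^1)^2$.
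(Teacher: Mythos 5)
Your proposal is correct and is precisely the argument the paper has in mind: the paper does not write out a proof but explicitly attributes the lemma to standard methods applied to the Salvetti complex as a classifying space for $A(\gam)$, which is exactly your route (zero cellular differentials, dual bases of cells, and cup products computed by restriction along the inclusion into the torus $(S^1)^n$). Your fleshed-out version is sound, the only cosmetic caveat being that ``isomorphism on $1$--skeleta hence on $H^1$'' is justified here not as a general principle but because you have already shown the cellular differentials vanish on both sides.
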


\subsection{The easy direction}
One direction of Theorem~\ref{thm:main} is straightforward.

\begin{lem}\label{lem:onlyif}
Let $\gam$ be a finite simplicial graph,
let $V=H^1(A(\gam),F)$, let $W=H^2(A(\gam),F)$, and let $q$ be the
cup product pairing $V\times V\to W$. If $(V,W,q)$ is Hamiltonian then $\gam$ admits a Hamiltonian path.
\end{lem}

\begin{proof}
Let $\{v_1,\ldots,v_n\}$ be a
list of the vertices in $\gam$, and write $\{v_1^*,\ldots,v_n^*\}$ for the corresponding dual cohomology
classes in $H^1(A(\gam),F)$ as in Lemma~\ref{lem:coho}.
Since $V$ is Hamiltonian, we may re-index $\{v_1^*,\ldots,v_n^*\}$ so as to assume that
$q(v_i^*,v_{i+1}^*)\neq 0$ for $1\leq i<n$. But then $\{v_i,v_{i+1}\}\in E(\gam)$ and hence $\{v_1,\ldots,v_n\}$ forms a Hamiltonian
path in $\gam$.
\end{proof}

The proof of the converse of Lemma~\ref{lem:onlyif} will occupy the remainder of this section.

\subsection{A linear algebraic reduction}\label{ss:reduction}
Here and for the remainder of the paper, we will use the notation $a_i^j$ for entries in a matrix, where the subscript denotes the
row and the superscript denotes the column. Rows of a matrix $A$ will be denotes $\{r_1,\ldots,r_n\}$, and the entry in the $j^{th}$ column
of $r_i$ will be denoted $r_i^j$.

Let $\{w_1,\ldots,w_n\}$ be an arbitrary basis for $V=H^1(A(\gam),F)$. We wish to show that there is a
re-indexing the basis $\{w_1,\ldots,w_n\}$ such that
after relabeling the basis vectors, we have $q(w_i,w_{i+1})\neq 0$ for $1\leq i<n$. We write
\[w_i=\sum_{j=1}^n a_i^j v_j^*\] for suitable coefficients $\{a_i^j\}_{1\leq i,j\leq n}\subset F$,
so that $A=(a_i^j)_{1\leq i,j\leq n}$ is an invertible matrix over $F$. Thus, we may view the rows of $A$ as corresponding to the expression
of $\{w_1,\ldots,w_n\}$ in terms of the basis $\{v_1^*,\ldots,v_n^*\}$, and a re-indexing of $\{w_1,\ldots,w_n\}$ is merely a permutation of the
rows of $A$.

A matrix $A\in \mathrm{M}_n(F)$ will be called~\emph{square--traceable} if for all $1\leq i<n$ there exists $1\leq j<n$ such that
the determinant of the minor \[A_{i}^j=\begin{pmatrix}a_i^j&a_{i}^{j+1}\\a_{i+1}^j&a_{i+1}^{j+1}\end{pmatrix}\] is nonzero.

The main technical fact we establish in this paper is the following purely linear algebraic statement:

\begin{lem}\label{lem:square-trace}
Let $A\in\GL_n(F)$. Then there is a permutation matrix $\sigma\in\GL_n(F)$ such that $\sigma A$ is square--traceable.
That is, $A$ is square--traceable, possibly after a permutation of the rows.
\end{lem}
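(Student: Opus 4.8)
The plan is to reformulate null-connectedness as a \emph{linear} condition and then to reduce square-traceability to the determinant. Fix two rows $r,s$ of $A$. The consecutive minor condition $r^j s^{j+1}-r^{j+1}s^j=0$ is, for fixed $r$, a system of linear equations in $s$; hence the set $U_r=\{s\in F^n : s \text{ is null-connected to } r\}$ is a linear subspace containing $r$, and since $r\neq 0$ at least one of these forms is nontrivial, so $U_r\subsetneq F^n$. I would first record the explicit structure of $U_r$: across a maximal run of consecutive nonzero entries of $r$, any $s\in U_r$ is a scalar multiple of $r$ on that run, while at each transition between a nonzero and a zero entry of $r$ the corresponding entry of $s$ is forced to vanish. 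Thus null-connectedness is only a \emph{local} (windowed) proportionality, decoupled across the zero entries of $r$; this is exactly why invertible matrices can nonetheless carry many null-connected pairs.

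With this language, Lemma~\ref{lem:square-trace} says precisely that the two--row graph $T(A)$ --- whose vertices are the rows $r_1,\ldots,r_n$ and whose edges join non-null-connected pairs --- admits a Hamiltonian path. The only global input available is that $\{r_1,\ldots,r_n\}$ is a basis of $F^n$, equivalently $\det A\neq 0$, so the proof must convert this rank hypothesis into the combinatorial conclusion. I would argue by contrapositive: assuming that \emph{every} ordering of the rows contains a consecutive null-connected pair (so $T(A)$ has no Hamiltonian path), I would aim to produce a linear dependence among the rows.

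To make the link to the determinant, I would develop a condensation-type identity in the spirit of Dodgson and Sylvester, expressing $\det A$ along any fixed order of the rows as an alternating sum of products of the consecutive $2\times 2$ minors $A_i^j$ appearing between adjacent rows. The guiding point is that each monomial in such an expansion contains, for every adjacent pair $(i,i+1)$, one of the minors $A_i^j$ as a factor; hence if rows $i$ and $i+1$ are null-connected --- so that \emph{all} their consecutive minors vanish --- every monomial is killed, and $\det A=0$ up to the explicit scalar factor produced by the identity. Combined over the orderings, whose consecutive null-connected pairs are forced by the assumed absence of a Hamiltonian path, this should force $\det A=0$, contradicting $A\in\GL_n(F)$.

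The hard part is the scalar factor. The classical condensation identity carries a ``denominator'' built from interior minors, and this factor vanishes precisely under the zero patterns that null-connectedness imposes; so the naive identity degenerates exactly in the cases we must handle. The genuine obstacle is therefore to produce a condensation/determinant expansion that remains valid for \emph{all} matrices irrespective of vanishing interior minors --- this is where I expect the paper's new square--trace machinery to be indispensable. Equivalently, on the combinatorial side one could run a rotation--extension (Pósa-type) argument on $T(A)$ and try to convert a maximal non-extendable path into a common forced-zero pattern across the unused rows; but because null-connectedness is only local proportionality, no standard Hamiltonicity criterion applies, and the crux in either approach is the same: promoting the local proportionality carried by null-connected pairs into a \emph{global} rank deficiency of $A$.
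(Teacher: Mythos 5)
Your framing is correct as far as it goes --- the contrapositive (no Hamiltonian path in the two--row graph forces $\det A=0$), and the local structure of null--connectedness as windowed proportionality across runs of nonzero entries, which is exactly the content of the paper's Lemma~\ref{lem:row-space} --- but the proposal stops at the crucial step. You write that the genuine obstacle is ``promoting the local proportionality carried by null--connected pairs into a global rank deficiency,'' and that you expect ``the paper's new square--trace machinery to be indispensable'' there. That step is not a technical footnote: it \emph{is} the lemma. What the paper actually does at that point is quite specific: working directly with the Leibniz expansion of $\det A$, it shows (i) if no reordering of rows avoids consecutive null--connected pairs, then every nonzero Leibniz monomial $(a_{\sigma(1)}^1,\ldots,a_{\sigma(n)}^n)$ must contain two consecutive entries lying in a common $1$--block, since otherwise reading off the rows $r_{\sigma(1)},\ldots,r_{\sigma(n)}$ in order would produce a good ordering; (ii) $1$--blocks are pairwise disjoint (Lemma~\ref{lem:1-block}), so the nonzero monomials partition canonically into ``complete $1$--tracks'' (Lemma~\ref{lem:unique}); (iii) within each track, proportionality of the rows of a $1$--minor makes the product of its entries independent of how those rows are permuted, so the signed sum over each track cancels to zero (Lemma~\ref{lem:perm-cancel}), giving $\det A=0$. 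This sign--pairing partition of the Leibniz expansion, not any condensation identity, is the missing idea.

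Moreover, the condensation route you sketch cannot succeed in the form you want, for a structural reason beyond the denominator issue you flag. Any denominator--free identity expressing $\det A$, for a \emph{fixed} row order, as an alternating sum of monomials each containing a consecutive $2\times 2$ minor from every adjacent pair of rows would imply that a \emph{single} bad ordering kills the determinant. That is false: the invertible matrix
\[\begin{pmatrix}1&0&0\\0&0&1\\0&1&0\end{pmatrix}\]
has its first two rows null--connected (both consecutive minors vanish), yet $\det=-1$. So the interior--minor denominators in Dodgson/Sylvester are essential, not removable artifacts, and no refinement of such an identity can do what you ask of it. The correct hypothesis is the much weaker one --- \emph{all} orderings are bad --- and it must be used per monomial, as in step (i) above; your phrase ``combined over the orderings'' has no precise meaning, since $\det A$ is a single scalar and the identities for different orderings do not compose. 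As it stands, the proposal is a program with the central mechanism explicitly deferred, not a proof.
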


If $A$ satisfies the conclusion of Lemma~\ref{lem:square-trace}, then we shall say that $A$ is \emph{permutation--square--traceable}.
Assuming Lemma~\ref{lem:square-trace}, we can complete the proof of Theorem~\ref{thm:main}, as follows from the next lemma.

\begin{lem}\label{lem:if}
Suppose $\gam$ is a finite simplicial graph that admits a Hamiltonian path,
let $V=H^1(A(\gam),F)$, let $W=H^2(A(\gam),F)$, and let $q$ be the
cup product pairing $V\times V\to W$. Then $(V,W,q)$ is Hamiltonian.
\end{lem}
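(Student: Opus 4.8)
The plan is to reduce the assertion to the assumed linear-algebraic Lemma~\ref{lem:square-trace} by translating nonvanishing of the cup product into nonvanishing of suitable $2\times 2$ minors. Fix an arbitrary basis $\{w_1,\ldots,w_n\}$ of $V$, and using the basis $\{v_1^*,\ldots,v_n^*\}$ of Lemma~\ref{lem:coho} write $w_i=\sum_j a_i^j v_j^*$, so that $A=(a_i^j)$ is invertible over $F$. First I would record the key dictionary. Expanding $q(w_i,w_k)=w_i\smile w_k$ bilinearly and using $v_j^*\smile v_l^*=-v_l^*\smile v_j^*$ together with Lemma~\ref{lem:coho}, every term $v_j^*\smile v_l^*$ vanishes unless $\{v_j,v_l\}\in E(\gam)$, in which case it equals $\pm e_k^*$ for the corresponding edge $e_k$. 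Grouping the $(j,l)$ and $(l,j)$ terms for an edge $\{v_j,v_l\}$ with $j<l$ produces the factor $a_i^j a_k^l-a_i^l a_k^j$, which is exactly the $2\times 2$ minor of $A$ on rows $i,k$ and columns $j,l$. Since distinct edges give distinct (hence linearly independent) basis vectors $e_k^*$ of $W$, there is no cancellation between different edges, and so $q(w_i,w_k)\neq 0$ precisely when some edge $\{v_j,v_l\}$ has nonzero associated $2\times 2$ minor.

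Next I would exploit the Hamiltonian path. Since $\gam$ admits one, we may order the vertices as $v_{\pi(1)},\ldots,v_{\pi(n)}$ so that each consecutive pair is an edge. Permuting the columns of $A$ by $\pi$ — equivalently, reindexing the basis $\{v_j^*\}$, which preserves invertibility — yields a matrix $A'\in\GL_n(F)$ in which \emph{every} consecutive column pair $(j,j+1)$ indexes an edge of $\gam$. Applying Lemma~\ref{lem:square-trace} to $A'$ produces a row permutation $\tau$ such that $\tau A'$ is square--traceable: for each $1\leq i<n$ there is $1\leq j<n$ with the consecutive minor on rows $i,i+1$ and columns $j,j+1$ nonzero. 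Because those consecutive columns index an edge, the dictionary of the previous paragraph forces $q(w_{\tau(i)},w_{\tau(i+1)})\neq 0$ for every $1\leq i<n$. Taking $\sigma=\tau$ then furnishes the permutation required by the definition of a Hamiltonian vector space, so $(V,W,q)$ is Hamiltonian.

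The genuine depth of the statement is packaged into Lemma~\ref{lem:square-trace}, which we are assuming; within the present argument there is no comparably hard step. The two points that require care, rather than difficulty, are the no--cancellation observation — verifying that distinct edges contribute linearly independent classes of $W$, so that $q(w_i,w_k)$ detects individual minors rather than a sum that could conspire to vanish — and the bookkeeping of the two independent permutations, one on columns arising from the Hamiltonian path and one on rows arising from square--traceability, so that each acts on the correct set of indices and the final permutation $\sigma$ reorders the $w_i$ as intended.
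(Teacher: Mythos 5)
Your proof is correct and follows essentially the same route as the paper's: reorder the $v_j^*$ (columns) along the Hamiltonian path, invoke Lemma~\ref{lem:square-trace} to reorder the $w_i$ (rows), and observe that a nonzero consecutive $2\times 2$ minor sitting over an edge-pair of columns forces $q(w_{\sigma(i)},w_{\sigma(i+1)})\neq 0$. Your explicit justification of the no-cancellation step (distinct edges contribute linearly independent classes $\pm e_k^*$ in $W$) is a point the paper leaves implicit, but it is the same argument.
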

\begin{proof}
Let $\{w_1,\ldots,w_n\}$ be a basis for $H^1(A(\gam),F)$, and let $\{v_1^*,\ldots,v_n^*\}$ be the standard basis for $H^1(A(\gam),F)$
arising from the duals of the vertices of $\gam$ as in Lemma~\ref{lem:coho}.
Assume that $q(v_i^*,v_{i+1}^*)\neq 0$ for $1\leq i<n$, which is possible since
$\gam$ contains a Hamiltonian path.
Writing \[w_i=\sum_{j=1}^n a_i^j v_j^*,\] we re-index the basis $\{w_1,\ldots,w_n\}$ so that the matrix $A=(a_i^j)_{1\leq i,j\leq n}$ is
square--traceable, which is possible by Lemma~\ref{lem:square-trace}.

A straightforward calculation shows that in expressing $q(w_i,w_{i+1})$ with respect to the vectors \[\{q(v_j^*,v_k^*)\mid 1\leq j< k\leq n\}\]
(which span $H^2(A(\gam),F)$), we have that the coefficient of the vector $q(v_j^*,v_{j+1}^*)$ is exactly $\det A_{i}^j$. Since
$\det A_{i}^j\neq 0$ for some choice of $j$ and since $q(v_j^*,v_{j+1}^*)\neq 0$
for all $j$ by assumption, we have $q(w_i,w_{i+1})\neq 0$. Thus, $(V,W,q)$ is Hamiltonian.
\end{proof}

\subsection{The two--row graph and square traceability}\label{ss:square-trace}

The permutation--square--traceability of a matrix $A$ can be formulated in terms of the combinatorics of a certain graph constructed from $A$.
Let $A\in\mathrm{M}_n(F)$. We view the rows of $A$ as $n$ linearly independent vectors over $F$, which we label $\{r_1,\ldots,r_n\}$.
We record these vectors as $r_i=(r_i^1,\ldots,r_i^n)$ for $1\leq i\leq n$. We construct
an undirected
 graph $\G(A)$, called the \emph{$2$--row graph} of $A$, as follows. The vertices of $\G(A)$ are simply the rows $\{r_1,\ldots,r_n\}$ of $A$.
Rows $r_i$ and $r_j$ of $A$ are connected by an edge in $\G(A)$ if and only if for some $1\leq k<n$, the minor
\[A_{i,j}^k=\begin{pmatrix}r_i^k&r_i^{k+1}\\r_j^k&r_j^{k+1}\end{pmatrix}\] is invertible, which the reader may compare with the definition
of square--traceability above. In other words, the rows $r_i$ and $r_j$ span an edge whenever
\[(\Lambda^2 A)(e_k\wedge e_{k+1})\] has a nonzero coefficient for $e_i\wedge e_j$, where here $\Lambda^2 A$ denotes the alternating
square of $A$ and where $\{e_1,\ldots,e_n\}$ denote standard basis vectors with respect to which $A$ is expressed.

One may check that for the identity matrix $\Id$, we have $\G(\Id)$ is a path, and for a dense open set of matrices in $\GL_n(\C)$ or
$\GL_n(\R)$, the $2$--row graph is complete. To see this last claim, we may restrict to the
dense and open subset of $n\times n$ matrices where all
entries are nonzero, and in the complement of the (closed subset with empty interior) where the determinant vanishes. For an
invertible matrix $A$ with all nonzero entries, two rows of $A$ fail to be connected by an edge in $\G(A)$ if and only if they are scalar
multiples of each other. it is then immediate that any two rows of $A$ are connected in $\G(A)$.

The graph $\G(A)$ will usually be considered for invertible matrices, though certainly one may consider non-invertible
and even non-square matrices, for which the definition of $\G(A)$ clearly still makes sense.

The usefulness of the $2$--row graph comes from the following trivial observation:
\begin{lem}
If $\G(A)$ contains a Hamiltonian path then $A$ is permutation--square--traceable.
\end{lem}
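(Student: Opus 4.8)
The plan is simply to unwind the definitions and observe that a Hamiltonian path in $\G(A)$ \emph{is} the data of a row permutation witnessing square--traceability. First I would fix a Hamiltonian path in $\G(A)$; by definition this is an enumeration $r_{\pi(1)}, r_{\pi(2)}, \ldots, r_{\pi(n)}$ of the rows of $A$ in which every consecutive pair $r_{\pi(i)}, r_{\pi(i+1)}$ (for $1 \le i < n$) spans an edge of $\G(A)$. By the construction of $\G(A)$, each such adjacency supplies some index $1 \le k < n$ for which the minor $A_{\pi(i),\pi(i+1)}^k$ is invertible.

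Next I would let $\sigma \in \GL_n(F)$ be the permutation matrix reordering the rows of $A$ according to $\pi$, so that the rows of $\sigma A$ are exactly $r_{\pi(1)}, \ldots, r_{\pi(n)}$ read top to bottom. The one point to verify---and it is the only place requiring any care---is that the adjacency relation of $\G(A)$, which is a condition on a $2 \times 2$ block formed from two rows and two consecutive columns, is insensitive to row permutations, since permuting rows leaves the column indices untouched. Concretely, when $r_{\pi(i)}$ and $r_{\pi(i+1)}$ become rows $i$ and $i+1$ of $\sigma A$, the consecutive minor $(\sigma A)_i^k$ coincides on the nose with the block $A_{\pi(i),\pi(i+1)}^k$ (the two rows even appear in the same order, so there is not so much as a sign to track), whence $\det (\sigma A)_i^k = \det A_{\pi(i),\pi(i+1)}^k \neq 0$.

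Finally I would conclude: for every $1 \le i < n$ there is a $k$ with $\det (\sigma A)_i^k \neq 0$, which is precisely the defining property of square--traceability for $\sigma A$. Hence $\sigma A$ is square--traceable, so $A$ is permutation--square--traceable by definition. I do not anticipate any genuine obstacle here; the entire content of the statement lies in matching the combinatorial notion (a path whose edges are $\G(A)$-adjacencies) with the linear-algebraic notion (consecutive rows possessing a nonsingular consecutive-column minor), and the translation is forced once the definitions are aligned.
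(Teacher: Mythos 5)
Your proof is correct and follows exactly the paper's approach: the paper likewise treats this as a trivial unwinding of definitions, taking the permutation induced by the order in which the Hamiltonian path visits the rows and noting that the resulting $\sigma A$ is square--traceable. Your additional verification that row permutations do not disturb the consecutive-column minors is just the explicit form of what the paper leaves implicit.
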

\begin{proof}
Choose a Hamiltonian path in $\G(A)$. Relabelling the vertices in this path
by the order in which the path visits them, we obtain a permutation matrix $\sigma$ such that $\sigma A$
is square--traceable.
\end{proof}

Thus, to prove Lemma~\ref{lem:square-trace}, it suffices to prove the following, which is the main innovation in this paper:

\begin{lem}\label{lem:two-row}
Let $A\in\GL_n(F)$. Then $\G(A)$ admits a Hamiltonian path.
\end{lem}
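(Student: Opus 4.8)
The plan is to prove the statement by induction on $n$, exploiting a reformulation of $\G(A)$ that makes the hypothesis of invertibility bite. Writing $c^1,\dots,c^n$ for the columns of $A$, the coefficient of $e_i\wedge e_j$ in $(\Lambda^2 A)(e_k\wedge e_{k+1})=c^k\wedge c^{k+1}$ is exactly the minor $\det A_{i,j}^k$; hence if I record for each $k$ the map $\phi_k\colon i\mapsto (a_i^k,a_i^{k+1})\in F^2$, then $\{i,j\}$ is an edge of $\G(A)$ precisely when $\phi_k(i)$ and $\phi_k(j)$ are linearly independent for some $k$. Thus $\G(A)=\bigcup_{k=1}^{n-1}G_k$, where each $G_k$ is a complete multipartite graph whose parts are the classes of rows sharing a common line through the origin under $\phi_k$ (with the rows killed by $\phi_k$ isolated). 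The whole difficulty is that invertibility must be used to forbid the degeneracies of these line-class partitions that would obstruct a spanning path.

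As a warm-up that isolates the mechanism, I would first show $\G(A)$ is connected. If not, partition the rows as $S\sqcup T$ with no edges between; then for every $k$ and every $i\in S$, $j\in T$ the vectors $\phi_k(i),\phi_k(j)$ are dependent. If for some $k$ both $\phi_k|_S$ and $\phi_k|_T$ were nonzero, all images would lie on a single line, forcing $c^k$ and $c^{k+1}$ to be proportional and contradicting invertibility; hence for each $k$ either $\phi_k\equiv 0$ on $S$ or $\phi_k\equiv 0$ on $T$, i.e.\ columns $k,k+1$ vanish identically on one side. Two consecutive column-pairs cannot vanish on opposite sides, since the shared column would then be zero; so all pairs vanish on the same side, forcing a block of identically zero rows and again contradicting invertibility. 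This shows how linear independence of the columns rules out the separations one fears, and it is the template for the harder argument.

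For the inductive step I would expand $\det A\neq 0$ along the last column: some term $a_i^n\det A_{\hat\imath,\hat n}$ is nonzero, producing a row $r_i$ with $a_i^n\neq 0$ whose deletion (together with column $n$) leaves an invertible $(n-1)\times(n-1)$ matrix $B$. By induction $\G(B)$ has a Hamiltonian path $P$; since the minors of $B$ in columns $k,k+1$ (for $k\le n-2$) are literally minors of $A$, $P$ is also a path in $\G(A)$ visiting every row except $r_i$. It then remains to splice $r_i$ onto $P$, and the natural tool is the last column-pair: the minor of $r_i$ and $r_p$ in columns $n-1,n$ equals $a_i^{n-1}a_p^n-a_i^na_p^{n-1}$, which is forced to be nonzero as soon as the endpoint $r_p$ of $P$ satisfies, say, $a_p^n=0$ and $a_p^{n-1}\neq 0$.

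The main obstacle is precisely this splicing. The example $A=\Id$, for which $\G(\Id)$ is a bare path and only its two extreme vertices can serve as endpoints of a spanning path, shows two things: the result is tight, so no slack-based extremal argument (Pósa rotations, degree or toughness conditions) can work, since $\G(A)$ may have minimum degree one; and the endpoints of the inductive Hamiltonian path cannot be chosen freely. I therefore expect that the induction must be strengthened to a statement that produces a Hamiltonian path of $\G(B)$ whose endpoint lies in a prescribed coordinate support (so that the attaching minor above is guaranteed nonzero), and that maintaining this endpoint bookkeeping as columns are added back is the genuinely hard, novel part. This is exactly where I would expect the new determinantal identities advertised in Subsection~\ref{ss:square-trace} to be indispensable: one needs a way to read off, from the nonvanishing of $\det A$, a reordering of the rows together with compatible witnessing column-pairs, rather than merely a perfect matching of rows to blocks (which is all that a plain Laplace expansion along disjoint consecutive column-pairs yields).
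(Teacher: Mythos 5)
Your setup is correct, and your warm-up is genuinely complete: the reformulation of $\G(A)$ as $\bigcup_{k} G_k$ with each $G_k$ complete multipartite, and the connectivity argument (a separation $S\sqcup T$ forces, for each $k$, one side to be killed by $\phi_k$, and then consecutive column-pairs force a block of identically zero rows) are both sound. But the proof of the statement itself has a genuine gap, one you yourself flag: the inductive splicing step is never carried out. After deleting a row $r_i$ with $a_i^n\det A_{\hat\imath,\hat n}\neq 0$ and the last column, induction gives a Hamiltonian path $P$ in $\G(B)\subseteq \G(A)$, but you have no control over the endpoints of $P$, and attaching $r_i$ requires an endpoint $r_p$ whose projection $(a_p^{n-1},a_p^n)$ is not proportional to $(a_i^{n-1},a_i^n)$. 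Your own example $A=\Id$ shows endpoints cannot be prescribed freely, and your proposed fix---strengthening the induction with ``endpoint bookkeeping''---is never formulated, let alone proved. It is also unclear that any such strengthening closes: the natural candidate (``there is a Hamiltonian path of $\G(A)$ whose endpoint avoids a prescribed line in the last column-pair'') is a strictly stronger statement whose inductive step would again demand endpoint control relative to a different line, and nothing in the proposal breaks this regress. So what you have is a correct reduction plus a plan, not a proof.

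For comparison, the paper does not induct at all; it proves the contrapositive by a cancellation argument in the Leibniz expansion of $\det A$. Null--connected rows that are entrywise nonzero on a run of consecutive columns are organized into maximal ``$1$--blocks,'' which have one--dimensional row space and are pairwise disjoint (Lemmas~\ref{lem:row-space} and~\ref{lem:1-block}); every string $(a_{\sigma(1)}^1,\ldots,a_{\sigma(n)}^n)$ of nonzero entries belongs to a unique complete ``$1$--track'' of such blocks (Lemma~\ref{lem:unique}); and any track containing a genuine $1$--minor contributes zero to $\det A$, because the rank--one structure makes the product of entries constant under permutations of the rows inside the block, so the signs cancel (Lemma~\ref{lem:perm-cancel}). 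If $\G(A)$ had no Hamiltonian path, every nonzero term string would contain two consecutive entries inside a $1$--minor, hence every track would cancel and $\det A=0$. This global cancellation mechanism is exactly the ``new determinantal identity'' your plan anticipated as indispensable but did not supply.
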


At first, it may seem difficult to prove anything general about the $2$--row graph of an invertible matrix. However, it is an enlightening exercise
to show that $\G(A)$ is always connected. The reader will note that connectedness of $\G(A)$ is obtained as an immediate consequence
of Lemma~\ref{lem:two-row}.

Observe that in general the converse of Lemma~\ref{lem:two-row} is false, by considering for example the matrix
\[A=\begin{pmatrix}0&1&0\\0&0&1\\0&1&0\end{pmatrix},\] whose two--row graph admits a Hamiltonian path but which is manifestly
non--invertible.

In order to establish Lemma~\ref{lem:two-row}, we will need to develop some more linear algebraic tools.
Henceforth, a \emph{minor} of a matrix $A$ is a square
 submatrix of $A$ obtained by deleting rows
and columns.

\begin{defin}\label{def:null-conn}
Let $r_i$ and $r_j$ be rows of $A$, and let $A_{i,j}$ be the $2\times n$ submatrix of $A$ spanned by these two rows.
We will say that rows $r_i$ and $r_j$ of $A$ are \emph{null--connected} if all $2\times 2$ minors of $A_{i,j}$ spanned by consecutive columns
are singular.
\end{defin}

We let $\Go(A)$ denote the graph spanned by the rows of $A$, with null--connectedness as the edge relation. Note that
$\G(A)$ and $\Go(A)$ are complementary graphs in the complete graph on the rows of $A$.

\begin{defin}\label{def:1-block}
Let $M$ be a submatrix of $A$. We will call $M$ a \emph{$1$--block} if the following conditions hold.
\begin{enumerate}
\item
All entries of $M$ are nonzero.
\item
$M$ has at least two rows and two columns, and all columns are consecutive in $A$.
\item
The subgraph of $\Go(A)$ spanned by the rows occurring in $M$ (viewed as rows of $A$) is connected.
\item
$M$ is maximal with respect to these conditions. That is, there is no submatrix $N$ of $A$ containing $M$ as a
proper submatrix, and which satisfies
the previous three conditions.
\end{enumerate}
\end{defin}

The following observation about $1$--blocks is elementary but useful, and justifies the terminology.

\begin{lem}\label{lem:row-space}
Let $M$ be a $1$--block in $A$. Then the row space of $M$ is one--dimensional.
\end{lem}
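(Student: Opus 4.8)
The plan is to show that every pair of rows occurring in $M$ is a scalar multiple of one another; since $M$ has at least two rows, each of which is nonzero (all entries being nonzero by condition~(1) of Definition~\ref{def:1-block}), this is equivalent to the row space of $M$ being one--dimensional. Fix two rows $r_i$ and $r_j$ of $M$. Condition~(3) of Definition~\ref{def:1-block} guarantees that $r_i$ and $r_j$ are joined in $\Go(A)$ by a path $r_i = r_{i_0}, r_{i_1}, \ldots, r_{i_t} = r_j$ all of whose vertices are rows of $M$ and all of whose consecutive vertices are null--connected. My strategy is to first prove proportionality for a single null--connected pair, restricted to the columns of $M$, and then to chain this relation along the path.

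For the single--pair step, I would suppose that $r_a$ and $r_b$ are null--connected rows of $M$, and let the columns of $M$ be the consecutive columns $p, p+1, \ldots, q$ of $A$ (with $q > p$ by condition~(2)). By condition~(1) every entry $r_a^k$ and $r_b^k$ with $p \le k \le q$ is nonzero. Null--connectedness of $r_a$ and $r_b$ implies in particular that for each $k$ with $p \le k < q$ the minor $A_{a,b}^k$ is singular, that is $r_a^k r_b^{k+1} - r_a^{k+1} r_b^k = 0$. Dividing by the nonzero quantity $r_b^k r_b^{k+1}$ yields $r_a^k/r_b^k = r_a^{k+1}/r_b^{k+1}$, so the ratio $r_a^k/r_b^k$ is independent of $k$ across the columns of $M$. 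Denoting this common value by $\lambda$, I conclude $r_a^k = \lambda r_b^k$ for all columns $k$ of $M$; that is, the restrictions of $r_a$ and $r_b$ to the columns of $M$ are proportional.

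It then remains to propagate this relation. Applying the previous step to each consecutive pair $(r_{i_\ell}, r_{i_{\ell+1}})$ along the path produces scalars $\lambda_\ell$, each necessarily nonzero because the entries are nonzero, such that $r_{i_\ell}^k = \lambda_\ell r_{i_{\ell+1}}^k$ on the columns of $M$. Composing these equalities shows that the restriction of $r_i$ to $M$ is a nonzero scalar multiple of the restriction of $r_j$. As $r_i$ and $r_j$ were arbitrary rows of $M$, every row of $M$ is a scalar multiple of any one fixed row, and hence the row space of $M$ is one--dimensional.

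The argument is elementary and I do not expect a serious obstacle; the only point requiring genuine care is the correct interplay of the hypotheses. Null--connectedness is imposed on \emph{all} consecutive $2\times 2$ minors of $A_{a,b}$ throughout $A$, whereas I use only those lying inside the consecutive column block of $M$, and the nonzero--entry hypothesis of condition~(1) is essential: without it, singular consecutive minors do not force proportionality, as the rows $(1,0)$ and $(0,1)$ illustrate. Finally, it is precisely the consecutivity of the columns of $M$ (condition~(2)) that allows the local equalities $r_a^k/r_b^k = r_a^{k+1}/r_b^{k+1}$ to be chained into a single proportionality constant.
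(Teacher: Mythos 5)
Your proof is correct and follows essentially the same route as the paper's: singular consecutive $2\times 2$ minors together with nonzero entries force a common ratio of corresponding entries, hence proportionality of null--connected rows, and the scalars chain across columns. In fact you are slightly more careful than the paper, whose proof treats an arbitrary pair of rows of $M$ as if they were directly null--connected, whereas you explicitly invoke condition (3) of Definition~\ref{def:1-block} and propagate the proportionality along a path in $\Go(A)$ through rows of $M$.
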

\begin{proof}
If $M$ has just two columns then the claim is clear. Let
$r_1=(a_1,\cdots, a_n)$ and $r_2=(b_1,\cdots, b_n)$ be two rows of $M$. Since these rows are null--connected in $A$, we have
that $(a_1,a_2)$ is proportional to $(b_1,b_2)$ by a nonzero constant, say $\lambda$. Similarly, $(a_2,a_3)$ and $(b_2,b_3)$ are proportional
by a nonzero constant, which must therefore be equal to $\lambda$. By induction on $n$, we have that $r_1$ and $r_2$ are proportional
by $\lambda$.
\end{proof}

Let $I\subset \{1,\ldots,n\}$ and let $1\leq s<t\leq n$. We will write $M^{s,t}_I$ for the submatrix
(not necessarily a $1$--block) spanned by rows in the index set $I$ and
columns with indices between $s$ and $t$ (inclusively). If $M_1=M_I^{s,t}$ and $M_2=M_{J}^{p,q}$ are such matrices, we will write
$M_1\triangleleft M_2$ if $p=t+1$.

The following lemma shows that the structure of $1$--blocks is highly constrained.

\begin{lem}\label{lem:1-block}
Let $M_1=M_I^{s,t}$ and $M_2=M_{J}^{p,q}$ be $1$--blocks. If $M_1\neq M_2$ then $M_1$ and $M_2$ are disjoint as submatrices of $A$.
\end{lem}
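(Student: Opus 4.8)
The plan is to argue by contradiction, leveraging the maximality clause (condition (4)) in Definition~\ref{def:1-block}. Suppose $M_1 = M_I^{s,t}$ and $M_2 = M_J^{p,q}$ are distinct $1$--blocks that are not disjoint. To be non-disjoint as submatrices means exactly that they share a common entry, so $I \cap J \neq \emptyset$ and the column ranges $[s,t]$ and $[p,q]$ overlap. Since the column ranges overlap, their union $R = [\min(s,p), \max(t,q)]$ is again a consecutive range, and I would show that the enlarged submatrix $M' = M_{I\cup J}^{R}$ satisfies conditions (1)--(3) of Definition~\ref{def:1-block}. Because $M_1 \neq M_2$, the submatrix $M'$ then properly contains at least one of $M_1, M_2$, which contradicts its maximality.

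Conditions (2) and (3) are essentially immediate. Since $M'$ contains $M_1$, it has at least two rows and two columns; and because the subgraphs of $\Go(A)$ spanned by $I$ and by $J$ are each connected and share the vertex indexed by $I \cap J$, their union on $I \cup J$ is again connected. The crux is condition (1), that every entry of $M'$ is nonzero. The key auxiliary observation, a local refinement of the argument proving Lemma~\ref{lem:row-space}, is the following: if rows $r_a$ and $r_b$ are null--connected in $A$ and $r_b$ has all nonzero entries on a consecutive interval of columns, then on that interval $r_a$ is a scalar multiple of $r_b$. Indeed, the vanishing of each consecutive $2\times 2$ minor forces $(r_a^k, r_a^{k+1})$ to be proportional to the nonzero vector $(r_b^k, r_b^{k+1})$, and the overlap in the shared column $k+1$ shows the proportionality constant is the same for all consecutive pairs.

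With this in hand, I would first observe that any row indexed by $i_0 \in I \cap J$ is nonzero throughout $R$: on $[s,t]$ because it lies in $M_1$, on $[p,q]$ because it lies in $M_2$, and since these intervals overlap their union is all of $R$. I then propagate nonzeroness outward along paths in $\Go(A)$. Given any $i \in I$, connectivity of the subgraph of $\Go(A)$ on $I$ provides a path from $i_0$ to $i$; traversing it and applying the auxiliary observation at each step shows inductively that every row of $I$ is nonzero on all of $R$, the proportionality scalar being nonzero at each step because the row in question is already known to be nonzero on $[s,t]\subseteq R$. The same argument on the subgraph spanned by $J$, using $[p,q]\subseteq R$, handles the rows of $J$. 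Together these establish condition (1).

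The main obstacle is precisely this final propagation step: one must ensure that the scalars relating successive rows never vanish, which is the only thing that could fail were a row to contain a zero inside $R$. The resolution is that every row of $I$ is guaranteed nonzero on $[s,t]$ and every row of $J$ on $[p,q]$, and since both intervals lie inside $R$, the relevant scalars are forced to be nonzero and the induction goes through. Once condition (1) is secured, $M'$ is a submatrix satisfying (1)--(3) that strictly enlarges one of $M_1, M_2$ (since $M_1\neq M_2$, it cannot equal both), and the resulting contradiction with maximality completes the proof.
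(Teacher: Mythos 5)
Your proof is correct, and while it rests on the same two ingredients as the paper's argument---the fact that null--connected rows are proportional, with a single scalar, along any consecutive run of columns where one of them has no zero entries, and the maximality clause (4) of Definition~\ref{def:1-block}---it is organized in a genuinely different way. The paper fixes $i\in I\cap J$, takes $\ell\in I$ null--connected to $r_i$, shows that $r_\ell$ restricted to columns $p,\ldots,q$ is a nonzero multiple of the corresponding part of $r_i$, and invokes maximality of $M_2$ to absorb $\ell$ into $J$; an induction along the connected subgraph of $\Go(A)$ spanned by $I$ gives $I\subseteq J$, a symmetric argument gives $J\subseteq I$, and the lemma is then deduced from the claim $I=J$ (a final reduction the paper leaves to the reader with a ``cf.~Lemma~\ref{lem:row-space}''). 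You instead build the single candidate submatrix $M'=M_{I\cup J}^{R}$ on the union of the row sets and the union $R$ of the two overlapping column intervals, verify conditions (1)--(3) for it, and apply maximality exactly once, to whichever of $M_1,M_2$ sits properly inside $M'$. Your route buys three things: only one appeal to maximality instead of one per absorbed row plus a symmetry step; simultaneous enlargement of rows and columns, which makes explicit the step the paper waves off as ``easily seen to be sufficient''; and a cleaner treatment of the boundary columns, where the paper's display with entries $a,b,c,d$ tacitly assumes columns on both sides of column $p$ lie in the blocks. The paper's route, in turn, isolates the statement $I=J$, which is the form of the conclusion most readily adapted to the remark following the lemma (horizontally adjacent $1$--blocks have disjoint row sets). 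You also correctly identified and resolved the one real pitfall of the union approach: the proportionality scalar at each propagation step could a priori vanish, and it is precisely the guaranteed nonzeroness of each row of $I$ on $[s,t]\subseteq R$ (respectively of each row of $J$ on $[p,q]\subseteq R$) that rules this out and lets the induction close.
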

\begin{proof}
Without loss of generality, $s\leq p$. Suppose that the $(i,k)$--entry of $A$ belongs to both blocks.
This means that $i\in I\cap J$ and $p\leq k\leq t$.
We claim that $I=J$ in this case, which is easily seen to be sufficient to establish the lemma (cf.~Lemma~\ref{lem:row-space}).

Let $R\subseteq I\setminus \{i\}$ denote the set of indices such that each row indexed by
$R$ is null--connected to the row $r_i$. Since $M_1$ is a $1$--block,
we have that $R\neq \emptyset$. Let $\ell\in R$. Let us then consider the submatrix $N$ of $A$ spanned by
$r_i$ and $r_{\ell}$, which must have the following shape:
\[N=\begin{pmatrix}\cdots&a_i&b_i&c_i&d_i&\cdots\\ \cdots&a_{\ell}&b_{\ell}&c_{\ell}&d_{\ell}&\cdots\end{pmatrix},\]
where here the entries $c_i$ and $c_{\ell}$ lie in the column indexed by $p$.

Now, since $M_1$ and $M_2$ are $1$--blocks, we must have that $0\notin \{b_i,c_i,d_i, b_{\ell}\}$.
Since $r_i$ and $r_{\ell}$ are null--connected, this forces $c_{\ell}$ and $d_{\ell}$ to be nonzero as well, and so the vectors $(c_i,d_i)$ and
$(c_{\ell},d_{\ell})$ are nonzero multiples of each other. It is clear that this
forces all entries in $r_{\ell}$ to be nonzero for columns indexed between $p$
and $q$.

The vector $(r_i^k,r_i^{k+1})$ is proportional to the vector $(r_{\ell}^k,r_{\ell}^{k+1})$ for $p\leq k<q$
by the null--connectedness of $r_i$ and $r_{\ell}$ (where here $r_i^k$ denotes the $k^{th}$ column entry in the row $r_i$).
It follows that these pairs of vectors \[\{(r_i^k,r_i^{k+1}),(r_{\ell}^k,r_{\ell}^{k+1})\}_{p\leq k<q}\] are all proportional with the
same nonzero constant of proportionality (cf.~the proof of Lemma~\ref{lem:row-space}).
We then conclude that the $\ell$--row $(r_{\ell}^p,\ldots,r_{\ell}^q)$ of $M_2$ is proportional to the $i$--row
$(r_i^p,\ldots,r_i^q)$ of $M_2$ by a nonzero constant of proportionality, whence the maximality part of the definition of a
$1$--block implies that $\ell\in J$. Since the rows indexed by $I$ span a connected subgraph of
$\Go(A)$, an easy induction shows that $I\subseteq J$. Evidently, the reverse inclusion holds by the same argument.
\end{proof}

We remark that it is not difficult to modify this argument and show that if $M_I^{p,q}$ and $M_J^{q+1,s}$ are $1$--blocks then
$I\cap J=\emptyset$.
Lemma~\ref{lem:1-block} also has the following easy consequence, which follows from the maximality in the definition of a $1$--block:

\begin{cor}\label{cor:partition}
A matrix $A$ admits a unique partition into submatrices of the following three types:
\begin{enumerate}
\item
$1$--blocks;
\item
$1\times 1$ nonzero matrices which do not belong to any $1$--block;
\item
$1\times 1$ zero matrices.
\end{enumerate}
\end{cor}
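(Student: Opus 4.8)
The plan is to show that the three listed types of submatrices are mutually exclusive and exhaustive, with the $1$-blocks forming a uniquely determined, pairwise disjoint family that accounts for precisely those entries belonging to some $1$-block, after which every remaining entry is forced to be a $1\times 1$ piece of type (2) or (3). The whole argument rests on Lemma~\ref{lem:1-block} together with the maximality built into Definition~\ref{def:1-block}.

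First I would observe that the collection $\mathcal{B}$ of all $1$-blocks of $A$ is intrinsically determined by $A$: a $1$-block is by Definition~\ref{def:1-block} a submatrix maximal with respect to conditions (1)--(3), and maximality is a property of $A$ alone, so there is no choice involved in listing the members of $\mathcal{B}$. The essential structural input is Lemma~\ref{lem:1-block}, which guarantees that any two distinct members of $\mathcal{B}$ are disjoint as submatrices of $A$ (and the remark immediately following it rules out shared rows even for horizontally adjacent blocks). Consequently the set $S$ of entries covered by the blocks in $\mathcal{B}$ is partitioned cleanly by those blocks, with no overlaps.

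Next I would dispatch the leftover entries. Every entry $a_i^j$ of $A$ either lies in $S$ or does not; if $a_i^j\notin S$, I treat $\{a_i^j\}$ as a $1\times 1$ submatrix and assign it type (3) when $a_i^j=0$ and type (2) when $a_i^j\neq 0$. In the latter case $a_i^j$ belongs to no $1$-block by construction, so the type-(2) description is genuinely satisfied. Together with the blocks of $\mathcal{B}$, this yields a partition of all entries of $A$ into pieces of the three prescribed types, which establishes existence.

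The subtler point, and the one I expect to be the main obstacle, is uniqueness. Here I would argue that the type-(1) pieces of any valid partition must coincide exactly with $\mathcal{B}$. Indeed, let $M\in\mathcal{B}$; by Definition~\ref{def:1-block}(1) all entries of $M$ are nonzero, so none can be a type-(3) piece, and since each lies in the $1$-block $M$, none can be a type-(2) piece either. Hence every entry of $M$ is covered by some type-(1) piece, that is, by some $1$-block $N$; but then $N$ and $M$ share an entry, forcing $N=M$ by Lemma~\ref{lem:1-block}. Thus $M$ itself must appear as a piece, so every block of $\mathcal{B}$ is used, and since every type-(1) piece is by definition a $1$-block (hence a member of $\mathcal{B}$), no extraneous block can appear. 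Once the type-(1) pieces are pinned down as exactly $\mathcal{B}$, the uncovered entries are precisely those outside $S$, and each is forced to be its own $1\times 1$ piece whose type is dictated by whether it vanishes. This gives uniqueness and completes the argument.
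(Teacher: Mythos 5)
Your proof is correct and follows exactly the route the paper intends: the paper offers no written argument beyond citing Lemma~\ref{lem:1-block} and the maximality clause of Definition~\ref{def:1-block}, and your existence step (pairwise disjointness of the canonically determined family of $1$--blocks, with leftover entries as $1\times 1$ pieces) together with your uniqueness step (entries of a $1$--block can be neither type (2) nor type (3), so each $1$--block must itself appear as a piece by disjointness) is precisely the spelled-out version of that sketch. No gaps; this is a faithful completion of the paper's ``easy consequence.''
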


\begin{ex}\label{ex:7x7}
To illustrate the foregoing concepts, consider the matrix
\[A=\begin{pmatrix}1&1&1&0&0&0&1\\0&1&0&1&0&0&1\\1&1&1&0&1&0&1\\
0&1&0&0&1&0&1\\1&1&1&0&0&1&0\\1&0&1&0&0&0&1\\1&0&0&0&1&0&1\end{pmatrix}.\]
Some tedious but straightforward calculations show that $A$ is invertible. The graph $\G(A)$ is highly connected; in fact, the only missing
edges are between rows $1$ and $3$, and between rows $6$ and $7$, and therefore these are the only pairs of rows
in $A$ that are null--connected.
Setting $I=\{1,3\}$, the submatrix $M_I^{1,3}$ forms the unique $1$--block in $A$.
\end{ex}

\begin{defin}
Let $M$ be a minor in $A$ with at least two rows.
We say that $M$ is a ~\emph{$1$--minor} if $M$ is contained in a $1$--block, and if the columns of $M$ are
consecutive in $A$.
\end{defin}

\begin{defin}
Let $A$ be an $n\times n$ matrix and let
\[M_1\triangleleft M_2 \triangleleft \cdots \triangleleft M_r\] be a sequence of minors in $A$
of the form $M_i=M_{J_i}^{s_i,t_i}$. We say that this
sequence is a \emph{$1$--track} if the following conditions hold:
\begin{enumerate}
\item
For each $i$, the matrix is a nonzero $1\times 1$ matrix or a $1$--minor.
\item
For $1\leq i<r$, it is not the case that $M_i$ and $M_{i+1}$ belong to a common $1$--block.
\end{enumerate}
We will say that a $1$--track $T$ is~\emph{complete} if the total number of columns in $T$ is $n$.
We say that two $1$--tracks $\{M_i\}_{1\leq i\leq r}$ and $\{N_j\}_{1\leq j\leq s}$ are \emph{different} if they are distinct sequences of matrices.
For $\sigma\in S_n$, we say that a string
\[\mathfrak{a}=\left(a_{\sigma(1)}^1,\ldots,a_{\sigma(n)}^n\right)\] of entries of $A$
\emph{belongs} to the (complete) $1$--track $T$ if every entry in $\mathfrak{a}$ belongs to
some minor in $T$.
\end{defin}

The following is a crucial fact about $1$--tracks.

\begin{lem}\label{lem:unique}
Let $\mathfrak{a}=(a_{\sigma(1)}^1,\ldots,a_{\sigma(n)}^n)$ be a string consisting of nonzero entries of $A$. Then $\mathfrak{a}$ belongs to
exactly one complete $1$--track.
\end{lem}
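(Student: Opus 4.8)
The plan is to reduce the statement to a purely combinatorial claim about how the columns $\{1,\dots,n\}$ get partitioned, and then to show that this partition is rigidly determined by $\mathfrak{a}$. The first observation I would make is that, since a minor of $A$ is by definition a \emph{square} submatrix, each minor $M_i=M_{J_i}^{s_i,t_i}$ occurring in a $1$--track satisfies $|J_i|=t_i-s_i+1$. Hence if $\mathfrak{a}$ belongs to the track $T$, then the columns $s_i,\dots,t_i$ contribute the $t_i-s_i+1$ \emph{distinct} rows $\sigma(s_i),\dots,\sigma(t_i)$, all lying in $J_i$; comparing cardinalities forces $J_i=\sigma(\{s_i,\dots,t_i\})$. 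Thus every minor of $T$ is completely determined by its column interval, and a complete $1$--track to which $\mathfrak{a}$ belongs is the same data as a partition of $\{1,\dots,n\}$ into consecutive intervals $[s_i,t_i]$ obeying the $1$--track axioms. The problem therefore becomes: show that exactly one such interval partition exists.

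Next I would use Corollary~\ref{cor:partition} to attach to each column $j$ the unique cell $C_j$ of the partition of $A$ containing the nonzero entry $a_{\sigma(j)}^j$, which is either a $1$--block or a standalone nonzero $1\times1$ cell. The core claim is that the intervals of $T$ must be exactly the maximal runs of consecutive columns $j,j+1,\dots$ for which $C_j=C_{j+1}=\cdots$ is a common $1$--block. I would establish this from two facts. First, a minor $M_i$ of length $\ge 2$ is a $1$--minor, hence contained in a single $1$--block $B$ (unique, since distinct $1$--blocks are disjoint by Lemma~\ref{lem:1-block}); as $J_i=\sigma([s_i,t_i])\subseteq I(B)$ and $[s_i,t_i]\subseteq[s(B),t(B)]$, each column $j$ in $M_i$ satisfies $C_j=B$, so two columns with distinct cells can never share a minor. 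Second, if $C_j=C_{j+1}=B$ is a $1$--block, then $j$ and $j+1$ must lie in the same minor: otherwise they fall in consecutive minors $M_i\triangleleft M_{i+1}$, whose submatrices at columns $j$ and $j+1$ are both contained in $B$, so $M_i$ and $M_{i+1}$ would belong to a common $1$--block, violating axiom~(2). Together these force the interval partition of any complete $1$--track containing $\mathfrak{a}$ to coincide with the maximal-run partition, which gives uniqueness.

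For existence, I would take the maximal-run partition and verify directly that the associated minors $M_i:=M_{\sigma([s_i,t_i])}^{s_i,t_i}$ form a complete $1$--track to which $\mathfrak{a}$ belongs. Runs of length one give nonzero $1\times1$ minors; runs of length $\ge 2$ consist of columns sharing a common $1$--block $B$, so the corresponding square submatrix has at least two rows, consecutive columns, and lies inside $B$, hence is a $1$--minor. Axiom~(2) holds because, by maximality of the runs, the cells $C_{t_i}$ and $C_{t_i+1}$ at the junction between $M_i$ and $M_{i+1}$ are not a common $1$--block, so $M_i$ and $M_{i+1}$ cannot belong to a common $1$--block. The intervals exhaust $\{1,\dots,n\}$, so the track is complete, and $\mathfrak{a}$ belongs to it by construction.

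The step I expect to require the most care is pinning down the precise meaning of axiom~(2) for minors that may be single entries lying inside a $1$--block, and verifying the clean equivalence that two consecutive minors belong to a common $1$--block if and only if the straddling columns $t_i$ and $t_i+1$ have the same $1$--block as cell. Getting this equivalence exactly right, together with the initial squareness observation that forces the row sets $J_i$, is what makes the maximal-run partition simultaneously well-defined and the unique answer; the remaining checks are routine given Corollary~\ref{cor:partition} and the disjointness of distinct $1$--blocks from Lemma~\ref{lem:1-block}.
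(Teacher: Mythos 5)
Your proof is correct and takes essentially the same route as the paper: both identify the unique complete $1$--track as the one whose minors are the maximal runs of consecutive entries of $\mathfrak{a}$ lying in a common $1$--block, with uniqueness resting on the disjointness of distinct $1$--blocks (Lemma~\ref{lem:1-block} and Corollary~\ref{cor:partition}). The only difference is one of exposition: the paper constructs this track greedily from left to right and asserts uniqueness by canonicity of the construction (cf.~Remark~\ref{rem:unique}), whereas you make that uniqueness argument explicit by showing any valid track's column intervals and row sets are forced.
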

\begin{proof}
We will construct a $1$--track $T$ to which $\mathfrak{a}$ belongs, in a canonical way. If $a_{\sigma(1)}^1$ does not belong to a $1$--block,
then we set $M_1=a_{\sigma(1)}^1$. If for all $1\leq i\leq n$ we have $a_{\sigma(i)}^i$ belong to the same $1$--block then we may set
$A=M_1$. Otherwise, there is a $k$ so that $a_{\sigma(j)}^j$ belongs to a single $1$--block $B$ for $j\leq k$, but $a_{\sigma(k+1)}^{k+1}$
does not.
We then let $M_1$ be the $1$--minor spanned by rows indexed by $\{\sigma(1),\ldots,\sigma(k)\}$ and columns indexed by $1\leq j\leq k$.

To construct $M_2$, we restart the construction of the previous paragraph at $a_{\sigma(k+1)}^{k+1}$, and thus inductively construct the
$1$--track $T$ to which the string $\mathfrak{a}$ belongs. It is immediate that the resulting $1$--track is complete.
Note that this construction is canonical and hence the resulting $1$--track is unique (cf.~Remark~\ref{rem:unique} below).
\end{proof}

\begin{rem}\label{rem:unique}
In order to obtain uniqueness of the $1$--track in the proof of Lemma~\ref{lem:unique}, we are
using the uniqueness of $1$--blocks from Lemma~\ref{lem:1-block} in an essential way. If $M'$ were a $1$--minor
containing both $a_{\sigma(i)}^i$ and $a_{\sigma(i+1)}^{i+1}$ that meets the $1$--minor $M_k\in T$, then $M'$ is a submatrix of $M_k$.
This follows, as Lemma~\ref{lem:1-block} implies that $M_k$ and $M'$ lie in a single
$1$--block, so that $M'$ will be subsumed as a submatrix of $M_k$ in the construction of $T$.
\end{rem}

Let $T$ be a complete $1$--track in a matrix $A$, and let $\Sigma_T\subset S_n$ be the collection of permutations $\sigma\in S_n$ such that
$\mathfrak{a}_\sigma=(a_{\sigma(1)}^1,\ldots,a_{\sigma(n)}^n)$ belongs to $T$. The next lemma shows that every ``cluster" of nonzero entries
coming from a $1$--minor in a matrix $A$ contributes nothing to the determinant of $A$.

\begin{lem}\label{lem:perm-cancel}
Let $T=\{M_i\}_{1\leq i\leq r}$ be a complete $1$--track in $A$ that contains at least one $1$--minor. Then
\[\sum_{\sigma\in\Sigma_T}\sgn(\sigma)\prod_{i=1}^n a_{\sigma(i)}^i=0.\]
\end{lem}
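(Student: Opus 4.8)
The plan is to exploit the one-dimensionality of the row space of each $1$-block (Lemma~\ref{lem:row-space}) to produce a sign-reversing involution on $\Sigma_T$ that pairs up permutations whose signed products cancel. Let $M_k$ be a $1$-minor in $T$; it is contained in a $1$-block $B$ whose rows are all scalar multiples of one another. I would first isolate the two column indices in $M_k$ where the ``split'' of the track occurs — more precisely, since $M_k$ has at least two rows, there are at least two rows $r_a, r_b$ of $A$ that both index $M_k$, and every $\sigma \in \Sigma_T$ must route the diagonal through the rows of this $1$-minor in its column range $[s_k, t_k]$, choosing for each column $j$ in that range one of the admissible rows.

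The key step is to find, within $M_k$, two adjacent columns $j, j+1$ in the range $[s_k,t_k]$ and two rows $r_a,r_b$ occurring in $M_k$ such that swapping which row is used in column $j$ versus column $j+1$ toggles between two permutations in $\Sigma_T$. Concretely, I would define an involution $\tau$ on $\Sigma_T$ as follows: given $\sigma$, look at the first column $j$ in the $1$-minor $M_k$; the entry $a_{\sigma(j)}^j$ and the neighboring entry $a_{\sigma(j+1)}^{j+1}$ both lie in $B$. Swapping the rows $\sigma(j)$ and $\sigma(j+1)$ produces $\sigma' = \sigma \cdot (j\ j+1)$ in cycle notation on the column indices, which still belongs to $\Sigma_T$ precisely because all entries of $B$ are nonzero and the rows are proportional, so the pattern of nonzero entries is preserved. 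The transposition $(j\ j+1)$ has sign $-1$, so $\sgn(\sigma') = -\sgn(\sigma)$.

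The main obstacle is ensuring that the products actually cancel, not merely that the signs flip: I need $\prod_i a_{\sigma(i)}^i = \prod_i a_{\sigma'(i)}^i$ after the swap. This is where Lemma~\ref{lem:row-space} does the real work. If rows $r_a = \sigma(j)$ and $r_b = \sigma(j+1)$ are proportional by $\lambda$ across the columns of $B$, then $a_a^j a_b^{j+1}$ and $a_b^j a_a^{j+1}$ — the two products of entries that differ between $\sigma$ and $\sigma'$ in columns $j,j+1$ — are \emph{equal}, since $a_b^j = \lambda a_a^j$ and $a_b^{j+1} = \lambda a_a^{j+1}$ give $a_a^j \cdot \lambda a_a^{j+1} = \lambda a_a^j \cdot a_a^{j+1}$. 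Hence the two monomials agree and their signed contributions cancel. I would then argue that $\tau$ is a genuine fixed-point-free involution on $\Sigma_T$ (fixed-point-freeness because $j \neq j+1$ always moves $\sigma$), so $\Sigma_T$ partitions into pairs $\{\sigma, \tau(\sigma)\}$ each contributing $\sgn(\sigma)\prod a_{\sigma(i)}^i + \sgn(\tau\sigma)\prod a_{\tau\sigma(i)}^i = 0$, yielding the vanishing sum.

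The delicate point to get right is the well-definedness of $\tau$: I must verify that $\sigma'$ still belongs to the same complete $1$-track $T$, which follows from Lemma~\ref{lem:unique} together with the observation that swapping two proportional rows inside a single $1$-block cannot change which $1$-minors the diagonal passes through (the canonical construction in Lemma~\ref{lem:unique} depends only on the $1$-block structure, which is symmetric in the rows of $B$). I would also need to fix a canonical choice of the adjacent pair $(j,j+1)$ and the rows to swap so that $\tau$ is an involution rather than merely an arbitrary sign-reversing map — choosing, say, the lexicographically first such transposition available within the first $1$-minor of $T$ guarantees $\tau^2 = \id$.
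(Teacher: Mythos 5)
Your proof is correct and takes essentially the same approach as the paper's: both hinge on Lemma~\ref{lem:row-space} (rows of a $1$--block are proportional), which makes the monomial $\prod_{i=1}^n a_{\sigma(i)}^i$ invariant under permuting the rows that $\sigma$ assigns to the columns of a fixed $1$--minor, while the sign is reversed. The only difference is bookkeeping: you cancel terms in pairs via the fixed-point-free, sign-reversing involution $\sigma\mapsto\sigma\circ(j\; j{+}1)$ on two adjacent columns of one $1$--minor, whereas the paper fixes the tail of the string and sums over the full symmetric group $S_k$ acting on the rows of the first $1$--minor; the cancellation mechanism is identical.
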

\begin{proof}
Without loss of generality, we will assume that $M_1$ is a $1$--minor, which has $k>1$ rows. Since $M_1$ is contained in a $1$--block, its
rows are all proportional to each other. It follows by an easy calculation that
\[\prod_{i=1}^k a_{\sigma(i)}^i\] is constant for $\sigma\in \Sigma_T$.

We identify $S_k$ with the permutations of
the $k$ rows of $A$ corresponding to the rows of $M_1$, and which fix the remaining rows of $A$. Let
$\mathfrak{a}_\sigma=(a_{\sigma(1)}^1,\ldots,a_{\sigma(n)}^n)$ belong to $T$. Then for all $\tau\in S_k$, we have
\[\left(a_{\tau\sigma(1)}^1,\ldots,a_{\tau\sigma(k)}^k, a_{\sigma(k+1)}^{k+1},\ldots,a_{\sigma(n)}^n\right)\] also belongs to $T$. It is
not difficult to see then that
for such a fixed $\sigma$, we have \[\sum_{\tau\in S_k}\sgn(\tau)
\left(a_{\tau\sigma(1)}^1\cdots a_{\tau\sigma(k)}^k\cdot a_{\sigma(k+1)}^{k+1}\cdots a_{\sigma(n)}^n\right)=0.\] Indeed, this follows
simply from the fact that half of the permutations in $S_k$ have sign $1$ and half have sign $-1$, and the previous observation
that the product $a_{\tau\sigma(1)}^1\cdots a_{\tau\sigma(k)}^k$ is independent of $\tau\in S_k$.

Finally, consider the sum \[\sum_{\sigma\in\Sigma_T}\sgn(\sigma)\prod_{i=1}^n a_{\sigma(i)}^i.\]
The previous considerations show that the total contribution from
strings $\mathfrak{a}_{\sigma}$ whose tail is of the form $(a_{\sigma(k+1)}^{k+1},\ldots,a_{\sigma(n)}^n)$
is zero. It follows that the total sum vanishes,
as claimed.
\end{proof}

We can now finish the proof of the main result.

\begin{proof}[Proof of Lemma~\ref{lem:two-row}]
It suffices to show that if $A\in\mathrm{GL}_n(F)$ then there is a reordering of the rows of $A$ so that no pair of consecutive rows is
null--connected. So, suppose the contrary, and we shall show that $\det(A)=0$.

Let $\mathfrak{a}_\sigma=(a_{\sigma(1)}^1,\ldots,a_{\sigma(n)}^n)$ be a string consisting of nonzero entries of $A$. Observe that there
must be a $1$--minor containing two consecutive entries of $\mathfrak{a}_{\sigma}$. Indeed, otherwise there would be a reordering of the
rows of $A$ such that no pair of consecutive rows is null--connected.

Let $\Sigma_N\subset S_n$ denote the set of permutations $\sigma$ for which the string
\[\mathfrak{a}_\sigma=\left(a_{\sigma(1)}^1,\ldots,a_{\sigma(n)}^n\right)\] consists of nonzero entries of $A$. By definition, we have
\[\det(A)=\sum_{\sigma\in \Sigma_N}\sgn(\sigma)\prod_{i=1}^n a_{\sigma(i)}^i.\] By Lemma~\ref{lem:unique}, every such string
$\mathfrak{a}_{\sigma}$ belongs to a unique complete $1$--track, and conversely for each complete $1$--track $T$ and each
$\mathfrak{a}_{\sigma}$ belonging to $T$ we have that $\sigma\in\Sigma_N$.

It follows that there is a collection of distinct complete $1$--tracks $\{T_1,\ldots,T_m\}$ such that the collection of strings belonging to these
$1$--tracks partitions $\{\mathfrak{a}_{\sigma}\}_{\sigma\in\Sigma_N}$.
Lemma~\ref{lem:perm-cancel} implies that $\det(A)=0$, as claimed.
\end{proof}

\begin{rem}

As stated in the introduction (see Proposition \ref{prob:property}), Lemma \ref{lem:two-row} admits a purely algebraic formulation,
which might be of independent interest in linear algebra. The equivalence between the two statements is immediate.

\end{rem}

\section{From Hamiltonian paths to Hamiltonian cycles}\label{sec:cycle}

In this section, we extend the definition of the two--row graph $\G(A)$ to allow for cyclic permutations of the columns of $A$.
To that end, we define the \emph{cyclic two--row graph} $\G^c(A)$ as follows. The vertices of $\G^c(A)$ are the rows of $A$. Two rows
$r_i$ and $r_j$ are adjacent if they are adjacent in $\G(A)$, or if the minor
\[R_{i,j}^n=\begin{pmatrix}r_i^n&r_i^1\\r_j^n&r_j^1\end{pmatrix}\] is
invertible. Note that, for example, $\G^c(\Id)$ is a cycle.
We establish a variation on Lemma~\ref{lem:two-row} by proving the following result:

\begin{prop}\label{prop:two-row}
Let $A\in\mathrm{GL}_n(F)$. Then $\G^c(A)$ contains a Hamiltonian cycle.
\end{prop}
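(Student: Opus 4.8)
The plan is to adapt the proof of Lemma~\ref{lem:two-row} by arranging the columns of $A$ cyclically rather than linearly, so that columns $n$ and $1$ count as consecutive. Concretely, I would reinterpret every notion from Subsection~\ref{ss:square-trace} by reading the column index modulo $n$: two rows are \emph{cyclically null--connected} if every $2\times 2$ minor they determine in cyclically consecutive columns (now including the pair $(n,1)$) is singular, and I let the cyclic analogue of $\Go(A)$ be the graph on the rows of $A$ whose edges record cyclic null--connectedness. By construction this graph is precisely the complement of $\G^c(A)$ in the complete graph on the rows, so a Hamiltonian cycle in $\G^c(A)$ is exactly a cyclic ordering $r_{\sigma(1)},\ldots,r_{\sigma(n)}$ of the rows in which no two cyclically consecutive rows are cyclically null--connected. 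The goal then becomes: for $A\in\GL_n(F)$, such a cyclic ordering exists; and as in the proof of Lemma~\ref{lem:two-row}, I would argue by contradiction, assuming no such ordering exists and deducing $\det A=0$.

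The bulk of the work is to promote the $1$--block apparatus (Definitions~\ref{def:null-conn} and~\ref{def:1-block}, Lemma~\ref{lem:row-space}, Lemma~\ref{lem:1-block}, Corollary~\ref{cor:partition}, Lemma~\ref{lem:unique}, and Lemma~\ref{lem:perm-cancel}) to the cyclic setting, where a \emph{cyclic $1$--block} is a maximal all--nonzero submatrix whose columns form a (possibly wrap--around) arc of $\Z/n\Z$ and whose rows span a connected subgraph of the cyclic null--connectedness graph. I expect the proofs of Lemma~\ref{lem:row-space} and Lemma~\ref{lem:1-block} to go through essentially verbatim: proportionality of rows propagates across consecutive columns, and now simply propagates once more across the pair $(n,1)$ without creating a contradiction, so cyclic $1$--blocks still have one--dimensional row space and remain pairwise disjoint. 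Granting disjointness, Corollary~\ref{cor:partition} and the uniqueness of the complete cyclic $1$--track attached to a given nonzero string $\mathfrak{a}_\sigma$ (Lemma~\ref{lem:unique}) follow: the columns $\Z/n\Z$ are partitioned canonically into maximal arcs of ``linked'' consecutive entries, each arc either a cyclic $1$--minor or a singleton. Crucially, the cancellation identity of Lemma~\ref{lem:perm-cancel} survives untouched: for a cyclic $1$--minor with $k\geq 2$ rows, those rows are mutually proportional, so the product of the chosen entries over the minor's columns is independent of how the $k$ rows are matched to the $k$ columns (this is insensitive to whether the columns wrap around), and summing the signs over $S_k$ gives $0$.

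With this machinery in place, the contradiction argument mirrors the proof of Lemma~\ref{lem:two-row} exactly. If $\G^c(A)$ has no Hamiltonian cycle, then for every permutation $\sigma$ yielding a nonzero string $\mathfrak{a}_\sigma=(a_{\sigma(1)}^1,\ldots,a_{\sigma(n)}^n)$, the cyclic ordering $r_{\sigma(1)},\ldots,r_{\sigma(n)}$ fails to be a Hamiltonian cycle, so some cyclically consecutive pair of rows is cyclically null--connected. Because the two diagonal entries of the corresponding $2\times 2$ minor are nonzero while the minor is singular, all four of its entries are forced to be nonzero and its two rows are proportional; hence that pair of consecutive entries of $\mathfrak{a}_\sigma$ lies in a common cyclic $1$--minor. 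Thus every nonzero string belongs to a complete cyclic $1$--track that contains at least one $1$--minor, these tracks partition the nonzero strings by uniqueness, and the cyclic form of Lemma~\ref{lem:perm-cancel} forces each track's contribution to $\det A$ to vanish. Therefore $\det A=0$, contradicting $A\in\GL_n(F)$, so $\G^c(A)$ must contain a Hamiltonian cycle.

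I would flag the following as the main obstacle. In the linear (path) setting the column index $1$ is a distinguished starting point, and the constructions of Lemma~\ref{lem:unique} and Remark~\ref{rem:unique} implicitly read the string left to right from it; cyclically there is no canonical start, so I must check that the decomposition into maximal linked arcs is genuinely well defined and unique as a cyclic object, and separately handle the degenerate case in which a single cyclic $1$--block wraps around and absorbs all $n$ columns (there the string forms one cyclic $1$--minor with $n$ proportional rows, which is harmless for cancellation since $\sum_{\tau\in S_n}\sgn(\tau)=0$). Verifying the cyclic analogue of the disjointness Lemma~\ref{lem:1-block}, which underwrites this uniqueness, is where the wrap--around genuinely has to be accommodated and is the step most deserving of care.
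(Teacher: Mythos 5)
Your proposal is correct and takes essentially the same route as the paper: both promote the $1$--block, $1$--minor, and $1$--track machinery of Subsection~\ref{ss:square-trace} to the cyclic setting (with null--connectedness read as the complement of adjacency in $\G^c(A)$) and then run the identical determinant-cancellation contradiction, concluding $\det A=0$ if no Hamiltonian cycle exists. The subtleties you flag --- well-definedness of the cyclic track decomposition, cyclic disjointness of blocks, and the degenerate all-columns wrap-around case --- are precisely the points the paper disposes of by asserting the earlier arguments generalize nearly verbatim, handling cancellation by cyclically permuting the columns so that a cyclic $1$--minor appears first in the track.
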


Recall that with the same notation as above, we consider a triple $(V,W,q)$ and say that it is \emph{cyclic Hamiltonian} if for every basis
$\{v_1,\ldots,v_n\}$ of $V$, there is a permutation $\sigma$ such that $q(v_{\sigma(i)},v_{\sigma(i+1)})\neq 0$ for $1\leq i\leq n$, and
where the indices are considered cyclically. The following is a restatement of Theorem~\ref{thm:main-cycle-intro}.

\begin{thm}\label{thm:main-cycle}
Let $\gam$ be a finite simplicial graph and let $F$ be a field. We set
\[V=H^1(A(\gam),F),\quad W=H^2(A(\gam),F),\] and let $q$ be the
cup product pairing $V\times V\to W$. Then $\gam$ admits a Hamiltonian cycle if and only if $(V,W,q)$ is cyclic Hamiltonian.
\end{thm}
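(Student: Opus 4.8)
The plan is to reprise, in the cyclic setting, the three-layer structure used for Theorem~\ref{thm:main}: an easy direction, a reduction to linear algebra, and the technical two-row result (here Proposition~\ref{prop:two-row}). First I would dispatch the direction ``cyclic Hamiltonian $\Rightarrow$ Hamiltonian cycle'' exactly as in Lemma~\ref{lem:onlyif}: applying cyclic Hamiltonicity to the standard basis $\{v_1^*,\ldots,v_n^*\}$ of Lemma~\ref{lem:coho} produces a permutation $\sigma$ with $q(v_{\sigma(i)}^*,v_{\sigma(i+1)}^*)\neq 0$ for all $i$ read cyclically; by Lemma~\ref{lem:coho} each nonvanishing cup product is an edge of $\gam$, and the extra condition $q(v_{\sigma(n)}^*,v_{\sigma(1)}^*)\neq 0$ supplies precisely the closing edge, so that $v_{\sigma(1)},\ldots,v_{\sigma(n)},v_{\sigma(1)}$ is a Hamiltonian cycle.

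For the converse I would mirror Lemma~\ref{lem:if}. Given an arbitrary basis $\{w_1,\ldots,w_n\}$ with $w_i=\sum_j a_i^j v_j^*$ and $A=(a_i^j)$ invertible, I reindex the vertices along a Hamiltonian cycle of $\gam$ so that $q(v_j^*,v_{j+1}^*)\neq 0$ for all $j$ cyclically, including the pair $(n,1)$. Proposition~\ref{prop:two-row} provides a Hamiltonian cycle in $\G^c(A)$, which after relabeling rows makes $A$ cyclically square--traceable. The cup-product computation of Lemma~\ref{lem:if} shows that the coefficient of $q(v_j^*,v_{j+1}^*)$ in $q(w_i,w_{i+1})$ is $\det A_i^j$, while the coefficient governing $q(v_n^*,v_1^*)$ in the wrap-around product $q(w_n,w_1)$ is, up to sign, the cyclic minor $\det R_{n,1}^n$. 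Cyclic square--traceability forces some such coefficient to be nonzero, and since every $q(v_j^*,v_{j+1}^*)$ is nonzero and these classes are independent in $W$, each $q(w_i,w_{i+1})$ is nonzero cyclically.

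The real work is Proposition~\ref{prop:two-row}, which I would prove by adapting the determinant-vanishing argument of Lemma~\ref{lem:two-row} to cyclic column adjacency. I would introduce \emph{cyclic null--connectedness} --- all $2\times 2$ minors spanned by cyclically consecutive columns, now including the pair $(n,1)$, are singular --- so that $\G^c(A)$ is the complement of the cyclic-null--connectedness graph. Arguing by contradiction, I assume $\G^c(A)$ has no Hamiltonian cycle and aim to prove $\det A=0$. The proportionality observation from the proof of Lemma~\ref{lem:two-row} extends verbatim to the column pair $(n,1)$, showing that every nonzero permutation-string $\mathfrak{a}_\sigma$ must have two cyclically consecutive entries lying in a common cyclic $1$--minor; the cancellation of Lemma~\ref{lem:perm-cancel} is purely local to a block of proportional rows and so transfers unchanged, killing the contribution of each resulting cyclic $1$--track.

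The main obstacle is that the $1$--block/$1$--track bookkeeping (Lemmas~\ref{lem:1-block},~\ref{lem:unique} and Corollary~\ref{cor:partition}) is built on the linear order of the columns: the disjointness argument uses ``$s\le p$'' and induction along columns, and the canonical track of Lemma~\ref{lem:unique} is constructed starting from column $1$. Cyclically, a $1$--block may straddle the boundary between column $n$ and column $1$, and there is no canonical starting column. I expect to handle this in one of two ways: either extend every definition to cyclically consecutive columns and re-verify disjointness and unique decomposition, where the only genuinely new configuration is a single block crossing the $n$--$1$ seam; or, using that both $\G^c(A)$ and $\det A$ (up to sign) are invariant under cyclic column shifts, rotate the columns so that the seam falls in a gap of the block decomposition, reducing to the linear statement, and dispose separately of the degenerate case in which no gap exists --- a situation that propagates row-proportionality around the entire column cycle and should directly contradict invertibility. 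Making this wrap-around analysis airtight, rather than the cohomological translation, is where I anticipate the difficulty to lie.
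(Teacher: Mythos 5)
Your proposal is correct and takes essentially the same route as the paper: the easy direction mirrors Lemma~\ref{lem:onlyif}, the hard direction repeats the cup-product computation of Lemma~\ref{lem:if} using Proposition~\ref{prop:two-row}, and that proposition is proved exactly as you outline, by extending the $1$--block/$1$--minor/$1$--track machinery to cyclically consecutive columns and rerunning the determinant-vanishing contradiction (the paper even uses your rotation idea, cyclically permuting columns so that a cyclic $1$--minor appears first, to transfer Lemma~\ref{lem:perm-cancel}). If anything, your handling of the seam-crossing blocks and the degenerate all-columns block is more explicit than the paper's, which declares those verifications straightforward.
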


As with Theorem~\ref{thm:main}, there is an easier and a harder direction. The proof of the easier direction is analogous to
the proof of Lemma~\ref{lem:onlyif} above, and the harder direction follows
from Proposition~\ref{prop:two-row} by a proof that is analogous to that of Theorem~\ref{thm:main}.

\subsection{Proving Proposition~\ref{prop:two-row}}

Many of the concepts from Subsection~\ref{ss:square-trace} generalize verbatim or nearly verbatim to the cyclic two--row graph, after some
obvious modifications. For one, null--connectedness is now a stronger condition, as it is the complement of the adjacency relation in $\G^c(A)$.
The definition of a \emph{cyclic $1$--block} is identical to the definition of a $1$--block in
Definition~\ref{def:1-block}, except that columns are arranged in a cyclic order and the relation of being consecutive is correspondingly
weakened. The superscripts in the notation $M_I^{s,t}$ are taken cyclically. So, if $A\in\mathrm{M}_6(F)$ for example and
if $I=\{1,2,5\}$,
then the submatrix $M_I^{4,1}$ is spanned by the rows indexed by $I$, and by columns $\{4,5,6,1\}$. The sixth and first columns
are thus viewed as consecutive.
For the purposes of our analyses, it is conceptually useful to imagine the matrix $A$ written on a torus, so that the top and bottom rows
are consecutive, and the leftmost and rightmost columns are consecutive.

With the modified notion of consecutiveness, the proofs of Lemma~\ref{lem:row-space} and
Lemma~\ref{lem:1-block} for cyclic $1$--blocks are the same. Similarly, the definition of a
\emph{cyclic $1$--minor} transfers to cyclic $1$--blocks easily.
Slight care should be taken when defining a \emph{cyclic $1$--track}: if $\{M_i\}_{1\leq i\leq r}$ is a cyclic $1$--track in $A$, then writing
$M_1=M_{J_1}^{s_1,t_1}$ and $M_r=M_{J_r}^{s_r,t_r}$, we require that $t_r\leq s_1$ in the cyclic order on the columns of $A$.
We obtain immediately the notion of a string $\mathfrak{a}$ belonging to a cyclic $1$--track, and now the string is considered up
to a cyclic permutation. It is straightforward to generalize Lemma~\ref{lem:unique} to cyclic $1$--tracks. Lemma~\ref{lem:perm-cancel}
then generalizes to cyclic $1$--tracks containing at least one cyclic $1$--minor, simply by cyclically permuting the columns of $A$ so that
a cyclic $1$--minor appears as the first minor in a $1$--track.

\begin{proof}[Proof of Proposition~\ref{prop:two-row}]
The proof is a reprise of the proof of Lemma~\ref{lem:two-row}: indeed, suppose the contrary.
Then, for any permutation $\sigma\in S_n$, at least one pair of rows $\{r_{\sigma(i)},r_{\sigma(i+1)}\}$ is null--connected,
for $1\leq i\leq n$. Let \[\mathfrak{a}_\sigma=(a_{\sigma(1)}^1,\ldots,a_{\sigma(n)}^n)\] be a string consisting of nonzero entries of $A$, and let
$T$ be the unique cyclic $1$--track to which $\mathfrak{a}_\sigma$ belongs. It must be the case that two consecutive entries in
$\mathfrak{a}_\sigma$ belong to a $1$--minor, since otherwise we would violate our initial null--connectedness assumption, and therefore we
 must have that $T$ contains at least one $1$--minor. The total sum of contributions to $\det(A)$ from $T$ is zero. It follows that $\det(A)=0$,
 a contradiction.
\end{proof}

\begin{rem}

Proposition \ref{prop:two-row} also admits an algebraic formulation analogous to Proposition \ref{prop:alternative}, by
 adopting the less restrictive notion of consecutiveness defined in this section.

\end{rem}

\begin{ex}
Let us briefly revisit Example~\ref{ex:7x7} above, in the context of the cyclic concepts.
If one allows for wraparounds in the columns, then there is one more $1$--block
in $A$. Indeed, set $J=\{6,7\}$. Then the submatrix $M_J^{7,1}$ is also a $1$--block. The remaining entries in $A$ are accounted for
according to Corollary~\ref{cor:partition}. The graphs $\G(A)$ and $\G^c(A)$ are isomorphic to each other.
\end{ex}

\section{Some questions}

The discussion in this paper naturally leads to some further questions. There are of course those that arise from the interest in
graph Hamiltonicity from the point of view of complexity theory. These questions are thoroughly discussed in the literature;
see for instance ~\cite{AB09,Minsky67,GJ1979,rosen-book}, and as such we will not comment on them in detail.

Because the main new ideas in this article come from linear algebra and in particular the two--row graph, we close with some
problems that arise from that perspective.

Let $q$ be a power of a prime and let $\mathbb{F}_q$ denote the field with $q$ elements.
Consider the uniform measure on the finite group $\mathrm{GL}_n(\mathbb{F}_q)$, thus producing a model for random matrices:

\begin{quest}
Let $A\in \mathrm{GL}_n(\mathbb{F}_q)$ be random. What is the probability that $\G(A)$ is complete, as a function of $n$? How
does this probability depend on $q$?
\end{quest}

A random matrix in $\mathrm{GL}_n(\mathbb{F}_q)$ will have the property that $\G(A)$ is Hamiltonian, and so we obtain a model
for a random Hamiltonian graph.

In the non-random context, the realization problem is interesting and is likely quite difficult for large $n$:

\begin{quest}\label{q:realization}
Let $\gam$ be a fixed graph Hamiltonian graph. Is $\gam\cong \G(A)$ for an invertible matrix $A$? What further restrictions
besides Hamiltonicity does the invertibility of $A$
impose?
\end{quest}

The realization problem is much easier if one does not require invertibility of $A$ and if one drops the requirement that $A$ be square.
For a non-square matrix $A$, we adopt the obvious generalization of $\G(A)$.

\begin{prop}
Let $\gam$ be an arbitrary finite simplicial graph. Then there is a matrix $A$ such that $\gam\cong\G(A)$.
\end{prop}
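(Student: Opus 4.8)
The plan is to give an explicit construction of a (generally non--square) matrix $A$ whose rows are indexed by $V(\gam)$ and whose two--row graph $\G(A)$ reproduces the adjacency relation of $\gam$ exactly. Write $V(\gam)=\{1,\dots,n\}$ and $E(\gam)=\{e_1,\dots,e_m\}$, and take $A$ to have $n$ rows. The idea is to devote a pair of consecutive columns to each edge: for $e_t=\{i,j\}$ I would fill the two columns of its \emph{block} so that row $i$ reads $(1,0)$, row $j$ reads $(0,1)$, and every other row reads $(0,0)$. Inside this block the only pair of rows spanning a nonsingular $2\times 2$ minor is $\{i,j\}$, since any other pair has at least one identically zero row, so its minor has determinant $0$. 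Thus each edge of $\gam$ contributes precisely the intended edge of $\G(A)$, and the remaining task is to ensure that no \emph{spurious} edges are created.

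The key step, and the main obstacle, is controlling the $2\times 2$ minors whose two columns straddle the boundary between two adjacent blocks; these can accidentally be nonsingular and would introduce edges absent from $\gam$. To neutralize them I would insert a single all--zero \emph{buffer} column between every two consecutive edge--blocks. Then any pair of consecutive columns of $A$ either lies inside a single block, which is handled above, or contains a buffer column, in which case the relevant $2\times 2$ minor has an entirely zero column and hence vanishing determinant. Consequently no consecutive minor living outside an edge--block is ever nonsingular, and it is exactly the buffer columns that make the straddling contributions harmless.

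With this in place the verification is routine. If $\{i,j\}\in E(\gam)$, evaluating the minor over the two columns of the block of that edge gives the identity matrix, so $i$ and $j$ are adjacent in $\G(A)$. Conversely, if $\{p,q\}\notin E(\gam)$, then in every edge--block at most one of the rows $p,q$ is nonzero (since $\{p,q\}$ is never the edge defining that block), so every minor inside a block is singular, while every straddling minor is singular because of a buffer column; hence $p$ and $q$ are non--adjacent. An isolated vertex of $\gam$ produces an identically zero row of $A$, which is adjacent to nothing, as required. This shows $\G(A)\cong\gam$, and the construction works over an arbitrary field since the only nonvanishing determinant used is $1\neq 0$.

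Finally I would dispose of the degenerate case $E(\gam)=\emptyset$ separately, where any all--zero matrix (for instance a single zero column of height $n$) already has no edges in its two--row graph, and I might record that the matrix produced has $n$ rows and $3m-1$ columns; these bookkeeping points are immediate and require no further argument.
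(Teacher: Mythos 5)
Your construction is correct and is essentially the paper's own proof: both dedicate a pair of columns carrying a $2\times 2$ identity minor (in the rows of the two endpoints) to each edge, and both use all--zero buffer columns so that no minor straddling two blocks can be invertible. The only difference is organizational --- you lay the blocks out edge by edge while the paper builds the matrix vertex by vertex with an iterative padding procedure --- and your explicit handling of the edgeless case is a harmless bookkeeping addition.
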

\begin{proof}[Proof sketch]
Let $\gam$ have $n$ vertices, labeled $\{v_1,\ldots,v_n\}$.
We build the matrix $A$, one vertex of $\gam$ at a time, padding with zeros appropriately.
Let $A_1$ be a $1\times 1$ matrix with a single $1$ entry. We set $A_2$ to be a $2\times 2$ matrix, which is the identity if $v_1$ and $v_2$
span an edge, and otherwise is the matrix \[\begin{pmatrix}1&0\\0&0\end{pmatrix}.\] Now suppose that $A_j$ is constructed. We construct
$A_{j+1}$ as follows.
\begin{enumerate}
\item
Add a row of zeros at the bottom of $A_j$ to get a matrix $B_{1,j}$, and set the counter $i=1$.
\item
Add a column of zeros on the right of $B_{1,j}$ to get a matrix $B_{2,j}$. If the counter $i$ is equal to $j+1$ then stop and set $A_{j+1}=
B_{2,j}$.
\item
If $v_i$ and $v_{j+1}$ span an edge in $\gam$, add two columns to $B_{2,j}$ which are zero everywhere except in rows $i$ and ${j+1}$,
wherein the entries are $(1,0)$ and $(0,1)$ respectively. Otherwise, do nothing to the matrix, increase the counter $i$ by one,
 and return to step $2$.
\item
Rename the resulting matrix $B_{1,j}$, increase the counter $i$ by one, and return to step $2$.
\end{enumerate}

We set $A=A_n$ with an extra column of zeros on the right.
It is straightforward to check that two rows of $A$ will be null--connected if and only if the corresponding vertices of $\gam$ do not
span an edge.
\end{proof}

One can also consider the problem of determining the permutation--square--traceability of a matrix $A$, or equivalently the Hamiltonicity of
the graph $\G(A)$.
Observe that to check by brute force that a matrix is permutation--square--traceable requires checking
all possible permutations of the rows and examining the edge relation between consecutive rows.

\begin{quest}\label{q:square-trace}
Let $A\in \mathrm{M}_n(\mathbb{F}_2)$. What is the complexity of the problem to decide whether or not $A$ is
permutation--square--traceable?
\end{quest}

Note that Question~\ref{q:square-trace} is only interesting for non-invertible matrices, by the main results of this paper.
Observe that a Hamiltonian path in $\G(A)$ is a certificate of the permutation--square--traceability of a matrix,
so that the problem in Question~\ref{q:square-trace} is
clearly in NP. We suspect that it should be NP--complete.

\section*{Acknowledgments}

\vspace{.5cm}

We thank F. Abeles, R. Brualdi, A. de Camargo, T. Markham and J.M. Pe\~{n}a for helpful comments, and the referee for his/her careful report.
Ram\'{o}n Flores is supported by FEDER-MEC grant MTM2016-76453-C2-1-P and FEDER grant US-1263032 from the Andalusian
Government. Thomas Koberda is partially supported  by an
Alfred P. Sloan Foundation Research Fellowship, by NSF Grant DMS-1711488, and by NSF Grant DMS-2002596.
Delaram Kahrobaei is supported in part by a
Canada's New Frontiers in Research Fund, under the Exploration grant entitled ``Algebraic Techniques for Quantum Security".
We thank the University of York for hospitality while part of this research
 was conducted.
\bibliographystyle{amsplain}
\bibliography{ref}

\end{document}